\documentclass[11pt]{article}
\usepackage[T1]{fontenc}
\usepackage[english]{babel}
\usepackage{microtype}
\usepackage{amsmath,amssymb,amsfonts,amsthm,mathtools}
\usepackage[shortlabels]{enumitem}
\usepackage{geometry}
\usepackage{xcolor}
\usepackage[colorlinks=true,linkcolor=blue,citecolor=blue,urlcolor=blue]{hyperref}
\newtheorem{theorem}{Theorem}[section]
\newtheorem{lemma}[theorem]{Lemma}

\newtheorem{proposition}[theorem]{Proposition}

\newtheorem{definition}[theorem]{Definition}
\newtheorem{remark}[theorem]{Remark}

\newtheorem{conjecture}[theorem]{Conjecture}

\newcommand{\eps}{\varepsilon}

\newcommand{\cB}{\mathcal B}

\newcommand{\floor}[1]{\lfloor #1\rfloor}

\newcommand{\Aut}{\operatorname{Aut}}

\title{
Proofs of Two Conjectures of Alon on Subgraph Counts
}

\author{
Peiru Kuang\thanks{Email: \href{mailto:peiru\_k@sjtu.edu.cn}{peiru\_k@sjtu.edu.cn}},\quad
Shuang Sun\thanks{Email: \href{mailto:chocolatesun@sjtu.edu.cn}{chocolatesun@sjtu.edu.cn}},\quad 
Yan Wang\thanks{Email: \href{mailto:yan.w@sjtu.edu.cn}{yan.w@sjtu.edu.cn}},\quad
and Jiasheng Zeng\thanks{Email: \href{mailto:jasonzeng@mail.ustc.edu.cn}{jasonzeng@mail.ustc.edu.cn}}
}
\date{\today}

\begin{document}
\maketitle
\begin{abstract}
All graphs considered are finite with no isolated vertices. Let $N(m,H)$ be the maximum number of subgraphs of a graph $G$ isomorphic to $H$, taken over all graphs $G$ with $m$ edges. Alon proved that $N(m,H)=\Theta_H(m^{\gamma(H)})$, where $\gamma(H)=(|V(H)|+D(H))/2$ and $D(H)=\max_{S\subseteq V(H)}(|S|-|N_H(S)|)$, and conjectured [Conjecture 1, Isr. J. Math., 1986] that limit of $N(m,H)/m^{\gamma(H)}$ exists as $m\to\infty$. We prove this conjecture and identify the limit as $\lambda(H)=\Lambda(H)/|\operatorname{Aut}(H)|$, where $\Lambda(H)$ is characterized by a variational problem over finite cores. We also resolve another conjecture of Alon [Conjecture 2, Isr. J. Math., 1986], which stated that if $H$ is a disjoint union of stars, then for every $m$ an extremal graph attaining $N(m,H)$ may be chosen to be a disjoint union of stars.
\end{abstract}
\section{Introduction}

One of the central topics in extremal graph theory is to determine the maximum number of copies of a prescribed structure under given constraints. A classical problem asks, for a given integer $n$ and a fixed graph $H$, to determine the maximum number of edges in an $n$-vertex graph containing no copy of $H$ as a subgraph, and to characterize the extremal graphs. This problem was initiated by Mantel \cite{Mantel1907}, who proved that every triangle-free graph on $n$ vertices has at most
$\left\lfloor {n^2}/{4} \right\rfloor$ edges, with equality attained precisely by the complete balanced bipartite graph. Tur\'an \cite{Turan1941} extended Mantel's theorem to general $K_r$-free graphs, showing that the unique extremal graph is the complete balanced $(r-1)$-partite graph, now known as the Tur\'an graph $T_{r-1}(n)$. A theorem of Erd\H{o}s, Stone and Simonovits~\cite{ErdosStone1946,Simonovits1968} further determines the asymptotic maximum number of edges in an $n$-vertex graph with no copy of a prescribed non-bipartite graph. While these classical results focus on maximizing the number of edges for a fixed 
number of vertices, Alon and Shikhelman~\cite{AlonShikhelman2016} initiated the 
systematic study of the \emph{generalized Tur\'an problem}: for two fixed graphs 
$F$ and $H$, determine the maximum number of copies of $F$ in an $n$-vertex graph 
containing no copy of $H$.

In this paper, we study a similar problem where the number of edges is fixed, rather than the number of vertices. This line of inquiry was initiated by Alon~\cite{Alon1981}, who investigated the maximum number of copies of a prescribed subgraph that can appear in a graph with a given number of edges. Unless otherwise stated, all graphs considered in this paper are finite, simple, 
and contain no isolated vertices; furthermore, all subgraphs are non-induced. For a fixed graph $H$, let $N(G,H)$ denote the number of subgraphs 
of $G$ isomorphic to $H$, and define $N(m,H) = \max \{ N(G,H) : e(G) = m \}$. Alon~\cite{Alon1981} proved that $N(m,H) = \Theta_H(m^{\gamma(H)})$ for every  fixed $H$, where $\gamma(H) = (|V(H)|+D(H))/2$ and $D(H) = \max_{S \subseteq V(H)} (|S| - |N_H(S)|)$. For the case where $D(H) = 0$, Alon determined the exact leading constant. The problem of determining the leading constant in general was subsequently formalized in the following conjecture~\cite{Alon1986}.

\begin{conjecture}[Alon, Conjecture 1 \cite{Alon1986}]\label{conj:alon}
For every fixed finite simple graph $H$ without isolated vertices, there is a positive constant $b(H)$ such that $\lim_{m\to\infty}N(m,H)/m^{\gamma(H)}=b(H)$.
\end{conjecture}

In the same paper, Alon also posed a conjecture describing the graphs attaining $N(m,H)$ when $H$ is a star forest, where a star forest is a vertex-disjoint union of stars.

\begin{conjecture}[Alon, Conjecture 2~\cite{Alon1986}]\label{AlonConj 2}
Let $H$ be a star forest. Then, for every integer $m>0$ or at least for all sufficiently large $m$, there exists a star forest $G_m$ with $e(G_m)=m$ such that
$
N(m,H)=N(G_m,H).
$
\end{conjecture}

Alon's problem has inspired several subsequent lines of research. Friedgut and Kahn \cite{FriedgutKahn1998} proved a hypergraph analogue in which the exponent is controlled by a fractional covering parameter. Janson, Oleszkiewicz and Ruci\'nski \cite{JansonOleszkiewiczRucinski2004} utilized related extremal estimates to study upper tails for the number of subgraph. Dudek, Polcyn and Ruciński \cite{DudekPolcynRucinski2010} further investigated subhypergraph counts through fractional $q$-independence. A related generalized Tur\'an problem with a given number of edges was recently studied by Wang, Xu, Zeng and Zhang~\cite{wang2025generalized}. Additionally, various extensions involving special graph families or simultaneous constraints on both order and size have been pursued by several authors, including Füredi \cite{Furedi1992}, Bollobás and Sarkar \cite{BollobasSarkar2001,BollobasSarkar2003}, Nagy \cite{Nagy2017}, Gerbner, Nagy, Patkós and Vizer \cite{GerbnerNagyPatkosVizer2021}, and Day and Sarkar \cite{DaySarkar2021}.

Our first result resolves Conjecture \ref{conj:alon}. We show the limit exists and the leading constant is characterized via a labeled variational problem over finite cores. 

\begin{theorem}\label{thm:main}
For every fixed finite simple graph $H$ without isolated vertices, \[\lim_{m\to\infty}N(m,H)/m^{\gamma(H)}=\lambda(H),\] where $\lambda(H)=\Lambda(H)/|\Aut(H)|$ and $\Lambda(H)$ is characterized by a variational problem over finite cores.
\end{theorem}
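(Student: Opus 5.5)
Throughout, I count labeled copies, that is, injective homomorphisms $H\to G$. These number $|\Aut(H)|\,N(G,H)$, so it suffices to prove $\hom_{\mathrm{inj}}(m,H)/m^{\gamma(H)}\to\Lambda(H)$, where $\hom_{\mathrm{inj}}(m,H)$ is the maximum number of injective homomorphisms $H\to G$ over graphs $G$ with $e(G)=m$. The guiding picture comes from Alon's lower-bound construction. An extremal $G$ looks like a bounded ``core'' graph $F$ in which each vertex $v$ carries a weight $x_v\ge 0$, and each edge $uv$ carries a weight $y_{uv}\ge0$ with $\sum y_{uv}=1$. The blow-up of $F$ replaces each vertex $v$ by roughly $m^{1/2}x_v$ ``heavy'' vertices (a clique-like part) or by $m\,y$ ``light'' leaf-type vertices attached to heavy ones. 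This blow-up has about $m$ edges. The count of $H$-copies in it is $m^{\gamma(H)}$ times a polynomial $P_{F,H}(x,y)$, obtained by summing over maps of $H$ into $F$ that respect the extremal structure: a set $S$ achieving $D(H)$ goes to light vertices, and $N_H(S)$ together with the rest goes to heavy vertices. I define $\Lambda(H)=\sup_F\sup_{(x,y)}P_{F,H}(x,y)$ over finite cores $F$ and normalized weights. The proof then has two halves.

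\textbf{Lower bound.} For any fixed core $F$ and weights, I take the blow-up with $m$ edges, rounding vertex class sizes to integers and padding with unused edges, which can only increase counts. Counting labeled copies gives $\liminf_m \hom_{\mathrm{inj}}(m,H)/m^{\gamma}\ge P_{F,H}(x,y)-o(1)$. Taking the supremum yields $\liminf\ge\Lambda(H)$. This step is routine once the definition of $P_{F,H}$ is fixed so that it is exactly the leading-order count.

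\textbf{Upper bound.} Fix $G$ with $m$ edges. I split $V(G)$ into high-degree vertices $B$, those with degree at least $\eps\sqrt m$ (so $|B|\le 2\sqrt m/\eps$), and low-degree vertices. Next I classify each copy of $H$ by which vertices land in $B$. Using Alon's fractional argument (a fractional edge cover/independence LP whose value is $\gamma(H)$), I show that copies whose pattern is not ``extremal'' number $O(\eps\, m^{\gamma})$ or $o(m^\gamma)$. Here extremal means light vertices form a maximizer $S$ of $|S|-|N(S)|$ with all neighbours in $B$, and the remaining structure is edge-dense. Extremal copies are then determined, up to the light vertices, by their images in $B$ and by the edges between $B$ and the light side. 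For these I would apply a regularity or weak-regularity partition to the pair (graph on $B$, degree vectors into the light side). This reduces $G$ to a weighted core $F$ of bounded size $K(\eps)$, whose count is at most $P_{F,H}+O(\eps)$ times $m^{\gamma}$. Hence $\limsup\le\Lambda(H)+O(\eps)$, and letting $\eps\to0$ gives the result. Finiteness of $\Lambda$ follows from Alon's bound $N(m,H)=O(m^\gamma)$.

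\textbf{Main obstacle.} The hard step is the compression in the upper bound: showing that the near-extremal structure of an arbitrary $G$ is captured, up to $o(m^\gamma)$ error, by a \emph{finite} core with bounded number of parts. The count is a mixed polynomial in the edge density on $B$ and in the codegrees of light vertices into tuples of $B$. Regularity controls the dense part only when $|B|^2\asymp m$, and the light-vertex contributions involve common neighbourhoods. I would first try a symmetrization/Zykov-type argument. Light vertices with the same neighbourhood type in $B$ can be cloned toward the best type without decreasing the count, by multilinearity in each light class. This reduces their neighbourhoods to boundedly many patterns, after which the weak regularity lemma applied on $B$ finishes the argument.
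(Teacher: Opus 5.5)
There is a genuine gap, and two of the steps you do state are wrong. First, your variational family has the wrong scales. The light vertices must hang on $O(1)$ \emph{individual} vertices of degree $\Theta(m)$, not on blown-up classes of $\Theta(\sqrt m)$ heavy vertices. In the paper this is a bounded core $F$, light classes $P_A$ of about $y_Am$ vertices attached to an arbitrary nonempty $A\subseteq V(F)$, and a separate clique of size about $\sqrt{2(1-s(y))m}$ joined to $F$. In a blow-up as you describe it, $m$ edges are spread homogeneously over classes of size $\Theta(\sqrt m)$ or $\Theta(m)$, so every vertex has degree $O(\sqrt m)$. Such a graph contains only $O(m^{|V(H)|/2})=o(m^{\gamma(H)})$ copies of $H$ whenever $D(H)>0$. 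Already the star $K_{1,m}$, which is extremal for $K_{1,k}$, is not of your form.

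Second, the claim in your upper bound that non-extremal patterns contribute $O(\eps m^{\gamma})$ or $o(m^{\gamma})$ is false. Take $H=2K_2$ and $G$ a perfect matching: there are $4m(m-1)=(1+o(1))\Lambda(2K_2)m^2$ labelled copies. Every one of them lies on vertices of degree $1<\eps\sqrt m$, which is a pattern you call non-extremal. Balanced residuals need not sit on high-degree vertices at all. The paper never localizes them; it bounds them by Shearer in the non-core part, $M(G[L],B_S)\le[2(1-s(y))m]^{|V(B_S)|/2}$, wherever they land.

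More importantly, the step you flag as the main obstacle is the entire content of the upper bound, and the tools you propose do not reach it. Your $B$ has up to $2\sqrt m/\eps$ vertices, and the bipartite graph from $B$ to the light side has at most $m$ edges, so it is sparse and weak regularity controls nothing at the relevant scale. What matters are the common neighbourhoods $|\{x:\psi(N_H(z)\cap S)\subseteq N_G(x)\}|$. These are of order $m$ only for tuples drawn from the $O(1)$ individual vertices of degree $\Theta(m)$. Those vertices are a vanishing fraction of $B$, so any bounded partition averages them away. Cloning light vertices between neighbourhood types does not help either: it changes the number of edges (a type $A$ costs $|A|$ edges per vertex), and it does not bound the number of types.

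Suppose you do pass to a bounded core $U$ and classify copies by $S=\phi^{-1}(U)$. The states that resist are those with $\beta(S)=\alpha$ but $B_S$ non-balanced. They have the full exponent whenever a vertex of degree $\Theta(m)$ outside $U$ plays the centre of the residual; for example, $H=P_4$, $S$ an endpoint, and residual a cherry. Since the degrees of $G$ can straddle any fixed cut-off, no single threshold removes these states uniformly in $G$.

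This is exactly what the paper's machinery handles. Lemma~\ref{lem:half} gives a half-integral optimum on $B_S$ in which some $d_0$ receives two vertices of $I$. Lemma~\ref{lem:promote} then shows that copies with $\phi(d_0)$ outside a set $W_\psi(\tau)$ of size $O(1/\tau)$ number only $O(\tau m^\alpha)$. Lemma~\ref{lem:sync} iterates this promotion over $R$ levels and pigeonholes over the levels. The iteration is needed because $\tau$ must be small compared with the number of core embeddings $\psi$, and that number grows with the core. Your plan contains no substitute for this, so the upper bound does not close.
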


We also resolve Alon's second conjecture in the same paper (see Conjecture 2 in \cite{Alon1986}).

\begin{theorem}\label{thm:alon-second}
If $H$ is a disjoint union of stars, then for every $m\ge 0$ there is a disjoint union of stars $S_m$ with $e(S_m)=m$ and $N(S_m,H)=N(m,H)$.
\end{theorem}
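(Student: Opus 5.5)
The plan is to transform an arbitrary graph $G$ with $e(G)=m$ into a star forest $S$ with $e(S)=m$ and $N(S,H)\ge N(G,H)$. I would use a greedy peeling. Let $v_1$ be a vertex of maximum degree in $G_1=G$, and let $G_2=G_1-v_1$ with isolated vertices removed. Repeat, peeling a maximum-degree vertex $v_i$ of $G_i$, and record $s_i=d_{G_i}(v_i)$. Then $\sum_i s_i=m$ and $s_1\ge s_2\ge\cdots$, since each $s_i$ is a maximum degree and degrees only drop under deletion. Set $S=\bigsqcup_i K_{1,s_i}$. I will prove $N(G,H)\le N(S,H)$ by induction on the number of peeling steps, in the stronger form
\[
N(G,H)\le N(K_{1,s_1}\sqcup G_2,H),
\]
followed by the induction hypothesis applied to $G_2$. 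For this to work, the hypothesis must apply to $G_2$ inside any disjoint host. So I would state the induction for all star forests $H$ simultaneously and for graphs of the form $G\sqcup F$ with $F$ an arbitrary star forest.

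Write $H=\bigsqcup_{j=1}^k K_{1,a_j}$. A copy of $H$ is determined by an injective choice of centers together with pairwise disjoint leaf sets, modulo $\Aut(H)$. I would count labelled embeddings rather than copies. I would split the embeddings of $H$ into $G$ according to the role of $v=v_1$: (i) $v$ is unused; (ii) $v$ is the center of the $j$-th star; (iii) $v$ is a leaf of the $j$-th star, whose center is some $w\in N(v)$. The edge $vw$ counts as a center-edge at $w$ when $a_j\ge2$, while for $a_j=1$ the roles are symmetric. Type (i) is exactly the count in $G_2$, and it appears identically in $K_{1,s_1}\sqcup G_2$. For type (ii), the leaves of the $j$-th star are chosen from $N_G(v)$, which has size $s_1$, and the rest of $H$ embeds into $G-v$. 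The leaves chosen at $v$ must avoid the remaining image, so type (ii) is at most the corresponding count in $K_{1,s_1}\sqcup G_2$. That count has $s_1$ fresh leaves that cannot collide with anything in $G_2$.

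The main obstacle is type (iii). Here $v$ plays a leaf role in $G$, but in $K_{1,s_1}\sqcup G_2$ the star centered at $v$ can serve as a leaf only for $K_{1,1}$ components. So I need a charging argument. For each such embedding $\phi$ with $v$ a leaf of star $j$ centered at $w$, I would compare it with embeddings in which $v$ becomes a center. The first thing I would try is the following degree exchange. For fixed images of the other components, the number of ways to complete star $j$ with center $w$ and $v$ among its leaves is at most $\binom{d_{G_2}(w)}{a_j-1}$ times the number of admissible $w$. Since $d_G(w)\le s_1$, this should be dominated by the surplus in the new count: in $K_{1,s_1}\sqcup G_2$, star $j$ can be placed at $v$ with its leaves drawn from the $s_1$ fresh vertices, and those leaves no longer compete with the other components. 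Concretely, I would show that the type (iii) count plus the loss in (ii) caused by collisions is at most the gain from the fresh leaves. I would do this by expanding both counts as sums over the index $j$ and the partial embedding of $H-K_{1,a_j}$, reducing the claim to an inequality between sums of binomial coefficients in $s_1$ and the degrees $d(w)\le s_1$. The key convexity fact would be that $\binom{x}{a}$ is convex and increasing in $x$, so concentrating edges at the maximum-degree vertex never hurts.

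If the direct exchange fails, the fallback I would try is the following. I would first apply Kelmans shifts, which I would expect not to decrease the count of star forests, and reduce to threshold graphs. Then I would carry out the peeling argument there, where neighborhoods are nested and the charging in type (iii) becomes an explicit injection obtained by swapping $v$ with a leaf of the star at $v$. Finally, $m=0$ is trivial, and the induction terminates when $G$ is itself a star forest. This gives $S_m=S$.
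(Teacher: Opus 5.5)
There is a genuine gap: your key step $N(G,H)\le N(K_{1,s_1}\sqcup G_2,H)$ is false, and so is the resulting claim $N(G,H)\le N(S,H)$ for the peeled forest $S$. Take $H=K_{1,2}$ and $G=K_3$, which is itself extremal for $m=3$. Peeling gives $s_1=2$, $G_2=K_2$ and $S=K_{1,2}\sqcup K_{1,1}$. But $N(K_3,K_{1,2})=3$, while $N(K_{1,2}\sqcup K_2,K_{1,2})=N(S,K_{1,2})=1$. The theorem survives here only because $K_{1,3}$ also has $3$ copies, and $K_{1,3}$ is not what peeling produces.

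The failure is exactly in your type (iii). Of the $6$ labeled embeddings into $K_3$, the $4$ that use $v_1$ as a leaf all disappear. This is because each edge $v_1w$ has been replaced by a pendant edge at a fresh vertex, so every neighbor $w$ loses a unit of degree. The only compensation you identify is that leaves placed at $v_1$ no longer collide with the rest of the image. But star $j$ could already be centered at $v_1$ in $G$ with leaves from $N_G(v_1)$, a set of the same size $s_1$. So the gain is only collision avoidance, which is zero when $H$ is a single star.

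Your convexity heuristic also points the wrong way. Peeling does not concentrate edges at $v_1$; its degree stays $s_1$. Instead it transfers one unit of degree from each neighbor to a new vertex of degree one. The new degree sequence is therefore majorized by the old one, so $\sum_v\binom{d(v)}{a}$, the count of $K_{1,a}$, can only decrease. Since $K_3$ is a threshold graph fixed by every Kelmans shift, the fallback runs into the same example, and the proposed ``swap $v$ with a leaf'' injection cannot exist.

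The missing idea is that the extremal star forest cannot be read off from $G$ by a local transformation. Its star sizes come from a global optimization, and the paper finds them by leaving the class of graphs $G$ altogether. Each copy of $H=\bigsqcup_i K_{1,a_i}$ in $G$ determines, through its edge set, an induced copy of $F=\bigsqcup_i K_{a_i}$ in the line graph $L(G)$, so $N(G,H)\le I(L(G),F)$. One then maximizes $I(X,F)=I(\overline X,K_{a_1,\ldots,a_r})$ over \emph{all} $m$-vertex graphs $X$. By the Brown--Sidorenko theorem (symmetrization for complete multipartite targets), the maximum is attained when $\overline X$ is complete multipartite, i.e.\ when $X=\bigsqcup_j K_{b_j}$. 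This $X$ is exactly $L(\bigsqcup_j K_{1,b_j})$, and there induced copies of $F$ correspond bijectively to copies of $H$. Relaxing from line graphs to arbitrary graphs is what makes symmetrization available, since cloning a vertex of $L(G)$ generally leaves the class of line graphs. An argument that stays inside the class of host graphs $G$ would need an operation quite different from your peeling.
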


We now outline the main ideas of the proofs.

\subsection{Proof Sketch of Theorem~\ref{thm:main} and~\ref{thm:alon-second}}

For the purpose of determining the leading constant, it is more convenient to count labeled embeddings rather than unlabeled copies directly. For a  graph $G$, let $M(G,H)$ be the number of embeddings of $H$ in $G$, which are injective maps $\phi:V(H)\to V(G)$ such that $\phi(u)\phi(v)\in E(G)$ for every $uv\in E(H)$. Let $M(m,H)=\max_{e(G)=m}M(G,H)$. Since each copy of $H$ corresponds to $|\Aut(H)|$ labeled embeddings, $M(G,H)=|\Aut(H)|N(G,H)$ for every $G$, where $\operatorname{Aut}(H)$ denotes the automorphism group of $H$. It follows immediately that $M(m,H)=|\Aut(H)|N(m,H)$.

For a graph $H$ without isolated vertices, define the fractional independence number by
\[
\alpha^*(H)=\max\left\{\sum_{v\in V(H)}x_v:x_u+x_v\le 1\ \text{for every }uv\in E(H),\ x_v\ge 0\right\}.
\]
By Proposition \ref{prop:alon-alpha} (see Section~\ref{sec:prelim}), $\gamma(H)=\alpha^*(H)$. Thus Conjecture \ref{conj:alon} is equivalent to the existence of the normalized labeled limit $M(m,H)/m^{\alpha^*(H)}$, after division of the leading constant by $|\Aut(H)|$. 
The corresponding dual linear program is the fractional edge-cover problem, namely
\[  \alpha^*(H) = \min\left\{ \sum_{e\in E(H)}\lambda_e \,\middle|\, \lambda_e\ge 0,\ \sum_{e\ni v}\lambda_e\ge 1\ \text{for every }v\in V(H)\right\}. \]
This dual formulation, together with Shearer's projection inequality, yields the global upper bound $ M(G,H)\le (2e(G))^{\alpha^*(H)}.$ It also explains why the correct normalization exponent is $\alpha^*(H)$.

The balanced case is the key to understanding the whole problem. We call a graph $B$ with no isolated vertices balanced if $\alpha^*(B)=|V(B)|/2$. For a balanced graph $B$, the fractional edge-cover formulation above, together with Shearer's projection inequality, immediately gives $M(m,B)\le (2m)^{|V(B)|/2}$. On the other hand, by taking a clique on $(1+o(1))\sqrt{2m}$ vertices, every injection from $V(B)$ into this clique is an embedding of $B$. Hence
\[ N(m,B)=(1+o(1))\frac{2^{|V(B)|/2}}{|\operatorname{Aut}(B)|}m^{|V(B)|/2}.
\]

This indicates that, for a balanced residual graph, the main contribution is made by a dense component on about $\sqrt m$ vertices. It also sheds light on the general case. One first fixes the few vertices in a copy of $H$ that are forced to lie in the bounded part of the host graph, and then counts the possible completions outside this bounded part.

To describe this more precisely, suppose that a set $S\subseteq V(H)$ has already been embedded into the bounded part of the host graph. Let $R_S=V(H)\setminus S$. Inside the induced subgraph $H[R_S]$, let $Z_S$ be the set of vertices that have no remaining neighbors in $R_S$, and let $B_S=H[R_S\setminus Z_S]$. A vertex $z\in Z_S$ has no edge to be realized among the remaining vertices, but it is not irrelevant. Once the image of $S$ has been fixed, the vertex $z$ must be placed in the common neighborhood of the already fixed images of the vertices in $N_H(z)\cap S$, and this common neighborhood may have a size of order $m$. In contrast, $B_S$ is the part of the remaining copy whose internal edges still have to be realized outside the bounded part of the host graph.

Therefore, the number of possible completions from this state is controlled by $\beta(S)=|Z_S|+\alpha^*(B_S)$, and Lemma~\ref{lem:beta} shows that $\beta(S)\le \alpha^*(H)$. If $\beta(S)<\alpha^*(H)$, then this state gives only a lower-order contribution. If $\beta(S)=\alpha^*(H)$ and $B_S$ is either empty or balanced, then its contribution has the expected form. The vertices in $Z_S$ are chosen from linear-size neighborhoods determined by $S$, while the balanced graph $B_S$ is placed in a dense component on about $\sqrt m$ vertices. The remaining case is when $\beta(S)=\alpha^*(H)$ and $B_S$ is not balanced. Such a state may still contribute on the same order as the main term, and therefore cannot be eliminated merely by truncating high-degree vertices or by applying a projection estimate to the residual graph.
The main technical point of the proof is to show that these same-exponent non-balanced states can nevertheless be removed from the leading term. We first use a half-integral structure theorem for non-balanced residuals to identify a vertex whose image must either lie in a bounded exceptional set or else contribute only $o(m^\alpha)$ embeddings. This gives a rooted light-or-promote lemma. Iterating this promotion over finitely many core levels and using a synchronization argument, we find one bounded core level at which all same-exponent non-admissible states are negligible. The remaining admissible states are then exactly those counted by the finite-core variational constant $\Lambda(H)$.

The proof of Theorem~\ref{thm:alon-second} is based on a reduction from star forests to induced subgraphs of line graphs. Write $H=\bigsqcup_{i=1}^r K_{1,a_i}$ and let $F=\bigsqcup_{i=1}^r K_{a_i}$. For every graph $G$ with $m$ edges, each copy of $H$ in $G$ determines, by its edge set, an induced copy of $F$ in the line graph $L(G)$. Hence $N(G,H)\le I(L(G),F)$. The complement $\overline F$ is the complete multipartite graph $K_{a_1,\ldots,a_r}$. Therefore, the theorem of Brown and Sidorenko implies that, among all graphs on $m$ vertices, the maximum number of induced copies of $F$ is attained by a disjoint union of cliques. Suppose that such an extremal disjoint union is $C=\bigsqcup_j K_{b_j}$. Replacing each clique $K_{b_j}$ by the star $K_{1,b_j}$ gives a star forest $S_m=\bigsqcup_j K_{1,b_j}$ with $e(S_m)=m$ and $L(S_m)=C$. The induced copies of $F$ in $C$ are then in bijection with the copies of $H$ in $S_m$. Consequently, for every graph $G$ with $m$ edges, we have $N(G,H)\le N(S_m,H)$, which proves that an extremal graph may be chosen to be a star forest.

\medskip
The paper is organized as follows. Section~\ref{sec:prelim} contains the preliminaries, including the fractional independence number, fractional edge covers, Shearer's projection inequality, the balanced case, and the finite-core variational constant $\Lambda(H)$. Section~\ref{sec:proof} proves Theorem~\ref{thm:main}. Section~\ref{sec:star-forests} proves Theorem~\ref{thm:alon-second}. Section~\ref{sec:examples} computes several examples and shows how the main theorem gives the leading constants for balanced graphs, star forests, and paths.

\section{Preliminaries}\label{sec:prelim}

All graphs considered in this paper are finite and simple, and subgraphs are assumed to be non-induced unless specified otherwise. By monotonicity, the variants with exactly $m$ edges and at most $m$ edges coincide for both $N(m,H)$ and $M(m,H)$. One can arbitrarily add edges on isolated vertices until the edge count reaches m without decreasing the number of subgraphs or labeled embeddings.
If $B$ is the empty graph, we use the conventions that $\alpha^*(B)=0$ and $M(G,B)=1$. For integers $q\ge r\ge 0$, write $(q)_r=q(q-1)\cdots(q-r+1)$, with $(q)_0=1$. A graph $B$ without isolated vertices is called balanced if $\alpha^*(B)=|V(B)|/2$. Since the constant vector $x_v=1/2$ is always feasible for the underlying optimization problem, $B$ being non-balanced implies that $\alpha^*(B)>|V(B)|/2$. For any vertex subset $S\subseteq V(H)$, let $N_H(S)=\{v\in V(H):uv\in E(H)\ \text{for some }u\in S\}$ be its neighborhood. Whenever a subset $U$ of a host graph is regarded as a core, the
core-neighborhood of a vertex $x\notin U$ means $N_G(x)\cap U$. We first give the following elementary property and we include the proof for the reader's convenience.

\begin{proposition}\label{prop:alon-alpha}
Let $H$ be a finite simple graph without isolated vertices, and define $D(H)=\max_{S\subseteq V(H)}(|S|-|N_H(S)|)$. Then $\alpha^*(H)=(|V(H)|+D(H))/2$.
\end{proposition}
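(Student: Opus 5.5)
We prove the two inequalities $\alpha^*(H)\ge (|V(H)|+D(H))/2$ and $\alpha^*(H)\le (|V(H)|+D(H))/2$ separately, the first by exhibiting a feasible vector and the second by exhibiting a feasible fractional edge cover for the dual program. Write $n=|V(H)|$.

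\emph{Lower bound.} Fix $S$ attaining $D(H)$. Define $x_v=1$ for $v\in S\setminus N_H(S)$, $x_v=0$ for $v\in N_H(S)$, and $x_v=1/2$ for all other vertices.

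First we check feasibility. A vertex of $S\setminus N_H(S)$ has all its neighbours in $N_H(S)$, so any edge at such a vertex has weight sum $1+0$. Every other edge joins two vertices of weight at most $1/2$, so its sum is at most $1$.

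Next we compute the objective. Let $a=|S\setminus N_H(S)|$, $b=|N_H(S)|$ and $c=|S\cap N_H(S)|$, so that $|S|=a+c$. Then
\[
\sum_v x_v=a+\tfrac12(n-a-b)=\tfrac12\bigl(n+a-b\bigr).
\]
Since $a-b=|S|-c-|N_H(S)|$, this equals $\tfrac12(n+D(H)-c)$, which loses exactly $c/2$. To remove this loss, we show the maximizer can be taken with $S\cap N_H(S)=\emptyset$. Replace $S$ by $S'=S\setminus N_H(S)$. Then $N_H(S')\subseteq N_H(S)$, and every $v\in S\cap N_H(S)$ is adjacent to some vertex of $S$, so $S'$ loses $c$ vertices. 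On the neighbourhood side we only get $|N_H(S')|\le |N_H(S)|$, which gives $|S'|-|N_H(S')|\ge D(H)-c$; this is not enough on its own.

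The cleaner route is to argue directly with $x$. Set $x_v=1/2$ on $S\cap N_H(S)$ as well; edges inside $S$ are then fine. An edge from a vertex of $S\cap N_H(S)$ to $N_H(S)$ has weight at most $1/2$. An edge from $S\cap N_H(S)$ to a vertex outside $S\cup N_H(S)$ cannot exist, since the neighbours of vertices of $S$ lie in $N_H(S)$. This yields the value $\tfrac12(n+a+c-b)=\tfrac12(n+D(H))$, so the lower bound holds without any reduction of $S$.

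\emph{Upper bound.} By LP duality it suffices to find a fractional edge cover of weight $(n+D(H))/2$. Alternatively, we can use the half-integrality of the fractional independence polytope: its vertices are $\{0,\tfrac12,1\}$-valued (the standard Nemhauser--Trotter fact). Taking an optimal half-integral $x$, let $I=\{v:x_v=1\}$. Every neighbour of a vertex of $I$ must have weight $0$, so $N_H(I)\subseteq\{v:x_v=0\}$. Hence
\[
\sum_v x_v\le |I|+\tfrac12\bigl(n-|I|-|N_H(I)|\bigr)=\tfrac12\bigl(n+|I|-|N_H(I)|\bigr)\le \tfrac12\bigl(n+D(H)\bigr).
\]

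The main obstacle is justifying half-integrality. I would prove it directly. Given an optimal $x$, take $\eps>0$ small and perturb $x$ by $\pm\eps$ on the vertices with $0<x_v<1/2$ and by $\mp\eps$ on those with $1/2<x_v<1$. Feasibility is preserved: a tight edge with both ends not in $\{0,\tfrac12,1\}$ has one endpoint below $1/2$ and one above, so the two changes cancel; other edges have slack. Linearity of the objective shows one of the two directions does not decrease it. Iterating, the number of distinct values in $(0,1)\setminus\{1/2\}$ strictly decreases, and we reach a half-integral optimum.
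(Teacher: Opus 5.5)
Your proof is correct in substance, but it differs from the paper's on both halves, and one step of your half-integrality argument needs tightening.

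\emph{Comparison with the paper.} For the lower bound, the paper first reduces the maximum in $D(H)$ to the independent set $I=S\setminus N_H(S)$. It uses a fact you missed when you abandoned $S'=S\setminus N_H(S)$: $N_H(S')$ is disjoint from $S\cap N_H(S)$, because a vertex of $S'$ with a neighbour in $S$ would itself lie in $N_H(S)$. Hence $|N_H(S')|\le |N_H(S)|-c$ and $|S'|-|N_H(S')|\ge D(H)$. Your direct vector ($1$ on $S\setminus N_H(S)$, $0$ on $N_H(S)\setminus S$, $\tfrac12$ elsewhere) avoids this reduction and is fine; its feasibility at the weight-$1$ vertices rests on the same fact.

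For the upper bound, the paper does not use half-integrality at all. For an arbitrary feasible $x$ it sets $w=2x-1$ and integrates over level sets, using $N_H(\{w\ge t\})\subseteq\{w\le -t\}$ to get $\sum_v w_v\le\int_0^1(|S_t|-|N_H(S_t)|)\,dt\le D(H)$. Your route first produces a half-integral optimum; this is the half-integrality part of the paper's later Lemma \ref{lem:half}, proved there via extreme points. Your bound is then the single-threshold case of the same counting. The paper's argument is shorter and needs no structure theory, while yours yields the half-integral structure as a by-product.

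\emph{The step to tighten.} With $\eps$ merely small, the number of distinct values in $(0,1)\setminus\{\tfrac12\}$ does not drop, so ``iterating'' makes no progress as written. The cleanest repair is to start from an optimal extreme point; one exists because the polytope is nonempty and bounded ($H$ has no isolated vertices). Write $V_-=\{v:0<x_v<\tfrac12\}$ and $V_+=\{v:\tfrac12<x_v<1\}$. Your check that $x\pm\eps(\mathbf 1_{V_-}-\mathbf 1_{V_+})$ are both feasible then shows that an extreme point has $V_-\cup V_+=\emptyset$.

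Alternatively, keep your iteration but push the step toward $\tfrac12$ until the first coordinate reaches $\tfrac12$. Feasibility persists on that whole interval:
\begin{itemize}
\item vertices of $V_+$ have neighbours only in $V_-$ (constant edge sum) or of value $0$ (decreasing sum);
\item every other changed edge joins two vertices of value at most $\tfrac12$.
\end{itemize}
Optimality forces $|V_-|=|V_+|$, so the objective is unchanged. Since values coming from $V_-$ and $V_+$ can only coincide at $\tfrac12$, the count then strictly drops.
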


\begin{proof}
We first reduce the maximum in the definition of $D(H)$ to independent sets. For any $S\subseteq V(H)$, set $I=S\setminus N_H(S)$. Then $I$ is independent, since a vertex of $I$ has no neighbor in $S$. Moreover, $N_H(I)\subseteq N_H(S)$, and $N_H(I)\cap (S\cap N_H(S))=\emptyset$. Hence $|N_H(I)|\le |N_H(S)|-|S\cap N_H(S)|$. Together with $|I|=|S|-|S\cap N_H(S)|$, we have $|I|-|N_H(I)|\ge |S|-|N_H(S)|$. Thus the maximum defining $D(H)$ is attained by an independent set.

Let $x$ be feasible for the fractional independent set program, and let $w_v=2x_v-1$. Since $H$ has no isolated vertices, each feasible coordinate satisfies $0\le x_v\le 1$, so $-1\le w_v\le 1$. For every edge $uv$, the constraint $x_u+x_v\le 1$ becomes $w_u+w_v\le 0$. For $0<t\le 1$, let $S_t=\{v:w_v\ge t\}$. If $u\in S_t$ and $uv\in E(H)$, then $w_v\le -t$, so $N_H(S_t)\subseteq\{v:w_v\le -t\}$. Therefore
\[
\sum_{v\in V(H)}w_v=\int_0^1|\{v:w_v\ge t\}|\,dt-\int_0^1|\{v:w_v\le -t\}|\,dt\le\int_0^1\bigl(|S_t|-|N_H(S_t)|\bigr)\,dt\le D(H).
\]
It follows that $\sum_v x_v\le |V(H)|/2+D(H)/2$.

Conversely, choose an independent set $I$ with $|I|-|N_H(I)|=D(H)$. Define $w_v=1$ for $v\in I$, $w_v=-1$ for $v\in N_H(I)$, and $w_v=0$ otherwise, and let $x_v=(1+w_v)/2$. Since $I$ is independent, every edge incident with a vertex of $I$ has its other endpoint in $N_H(I)$, and every edge with no endpoint in $I$ has endpoint weights among $0$ and $-1$. Thus $x_u+x_v\le 1$ for every edge $uv$, so $x$ is feasible. Its objective value is $|V(H)|/2+(|I|-|N_H(I)|)/2=(|V(H)|+D(H))/2$. This proves the equality.
\end{proof}

The dual of the linear program for $\alpha^*(H)$ is the fractional edge-cover program
\[
\alpha^*(H)=\min\left\{\sum_{e\in E(H)}\lambda_e:\lambda_e\ge 0,\ \sum_{e\ni v}\lambda_e\ge 1\ \text{for every }v\in V(H)\right\}.
\]
This equality follows from finite-dimensional linear programming duality \cite[Chapter 7]{Schrijver1986}; the primal is bounded because $H$ has no isolated vertices, and the dual is feasible, for example by taking all edge weights equal to $1$.

Fix a graph $H$ without isolated vertices and set $\alpha=\alpha^*(H)$. For $S\subseteq V(H)$, let $R_S=V(H)\setminus S$. In the induced graph $H[R_S]$, let $Z_S=\{z\in R_S:d_{H[R_S]}(z)=0\}$, and let $B_S=H[R_S\setminus Z_S]$. Thus $B_S$ is either empty or has no isolated vertices. Define $\beta(S)=|Z_S|+\alpha^*(B_S)$.

\begin{definition}\label{def:admissible}
A set $S\subseteq V(H)$ is admissible if $\beta(S)=\alpha$ and $B_S$ is either empty or balanced, equivalently $\alpha^*(B_S)=|V(B_S)|/2$ when $B_S$ is nonempty.
\end{definition}

\begin{definition}\label{def:Lambda}
Let $F$ be a finite graph, possibly empty, viewed as a core. For every nonempty $A\subseteq V(F)$ choose a variable $y_A\ge 0$, and require $s(y)=\sum_{\emptyset\ne A\subseteq V(F)}|A|y_A\le 1$. For $\Gamma\subseteq V(F)$, set $Y_\Gamma=\sum_{A\supseteq\Gamma}y_A$. If $S$ is admissible and $\psi:H[S]\hookrightarrow F$ is a core embedding, define $\Gamma_\psi(z)=\psi(N_H(z)\cap S)$ for $z\in Z_S$; this set is nonempty because $z$ is isolated in $H[R_S]$ and $H$ has no isolated vertices. There is one embedding of the empty graph into every core. Define
\[
\Phi_H(F,y)=\sum_{\substack{S\subseteq V(H)\\ S\text{ admissible}}}\ \sum_{\psi:H[S]\hookrightarrow F}\left(\prod_{z\in Z_S}Y_{\Gamma_\psi(z)}\right)\bigl[2(1-s(y))\bigr]^{|V(B_S)|/2},
\]
and define $\Lambda(H)=\sup_{F,y}\Phi_H(F,y)$, where the supremum ranges over all finite graphs $F$, including the empty graph, and all nonnegative vectors $y$ satisfying $s(y)\le 1$. Define also $\lambda(H)=\Lambda(H)/|\Aut(H)|$. Empty products are interpreted as $1$, and an empty residual $B_S$ contributes the factor $[2(1-s(y))]^0=1$. At this point $\Lambda(H)$ and $\lambda(H)$ are allowed to be extended real numbers; their finiteness will be verified immediately after Lemma \ref{lem:lower}.
\end{definition}

The following weighted form of Shearer's projection inequality is standard; we include the proof for completeness, see also \cite{ChungGrahamFranklShearer1986}.

\begin{lemma}\label{lem:shearer}
Let $\Omega$ be a finite set of assignments on a finite coordinate set. For each index $i$, let $\pi_i(\Omega)$ be the projection of $\Omega$ onto the coordinate subset $A_i$. If $\lambda_i\ge 0$ and every coordinate $v$ is covered with total weight at least $1$, meaning $\sum_{i:v\in A_i}\lambda_i\ge 1$, then $|\Omega|\le\prod_i|\pi_i(\Omega)|^{\lambda_i}$.
\end{lemma}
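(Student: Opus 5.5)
We will prove the weighted Shearer inequality through the entropy method. Let $X=(X_v)_v$ be a uniformly random element of $\Omega$, so that $\mathrm{H}(X)=\log|\Omega|$; here $\mathrm{H}$ denotes Shannon entropy. For each index $i$, the projection $X_{A_i}$ takes values in $\pi_i(\Omega)$. Hence $\mathrm{H}(X_{A_i})\le\log|\pi_i(\Omega)|$. Once we show
\[
\mathrm{H}(X)\le\sum_i\lambda_i\,\mathrm{H}(X_{A_i}),
\]
exponentiating gives the claim. We may assume $\Omega\neq\emptyset$, since otherwise both sides are handled trivially: the left side is $0$.

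The key step is Shearer's entropy lemma in weighted form. Fix a linear order $v_1<\dots<v_n$ on the coordinates. The chain rule gives
\[
\mathrm{H}(X)=\sum_{j}\mathrm{H}(X_{v_j}\mid X_{v_1},\dots,X_{v_{j-1}}).
\]
For each $i$, write $A_i=\{v_{j_1}<\dots<v_{j_k}\}$. Applying the chain rule to $X_{A_i}$ gives
\[
\mathrm{H}(X_{A_i})=\sum_{t}\mathrm{H}(X_{v_{j_t}}\mid X_{v_{j_1}},\dots,X_{v_{j_{t-1}}}).
\]
Because conditioning does not increase entropy, each term in this sum is at least
\[
\mathrm{H}(X_{v_{j_t}}\mid X_{v_1},\dots,X_{v_{j_t-1}}).
\]
Multiplying by $\lambda_i\ge0$ and summing over $i$ yields
\[
\sum_i\lambda_i\mathrm{H}(X_{A_i})\ge\sum_j\Bigl(\sum_{i:v_j\in A_i}\lambda_i\Bigr)\mathrm{H}(X_{v_j}\mid X_{v_1},\dots,X_{v_{j-1}}).
\]
Each conditional entropy is nonnegative and each coefficient is at least $1$ by the covering hypothesis. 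Therefore the right-hand side is at least $\mathrm{H}(X)$.

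No step is a serious obstacle. The only points needing care are bookkeeping. First, the coordinate-by-coordinate comparison uses monotonicity of conditional entropy under enlarging the conditioning set. Second, the weights $\lambda_i$ may exceed $1$, which is harmless precisely because the conditional entropies are nonnegative. Third, projections with $\pi_i(\Omega)$ of size $1$ contribute zero, which is consistent with both sides of the inequality.
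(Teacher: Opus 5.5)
Your proof is correct and is essentially the paper's own argument. Take $X$ uniform on $\Omega$, apply the chain rule to each $X_{A_i}$, enlarge the conditioning to all preceding coordinates, and then use the covering condition together with the nonnegativity of conditional entropies to obtain $\sum_i\lambda_i\mathrm{H}(X_{A_i})\ge\mathrm{H}(X)=\log|\Omega|$, with the empty case $\Omega=\emptyset$ handled separately just as the paper does.
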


\begin{proof}
If $\Omega=\emptyset$, then we are done. Thus we may assume $\Omega\ne\emptyset$, and let $X$ be uniformly distributed on $\Omega$.
Choose an ordering $<$ of the coordinate set. For a coordinate $v$, write $X_{<v}$ for the collection of coordinates preceding $v$, and write $A_{i,<v}=A_i\cap\{u:u<v\}$. The chain rule for entropy and the fact that conditioning on more coordinates cannot increase entropy gives
\[
\mathrm H(X_{A_i})=\sum_{v\in A_i}\mathrm H(X_v\mid X_{A_{i,<v}})\ge\sum_{v\in A_i}\mathrm H(X_v\mid X_{<v}).
\]
Multiplying by $\lambda_i$, summing in $i$, and using the covering condition gives
\[
\sum_i\lambda_i\mathrm H(X_{A_i})\ge\sum_v\left(\sum_{i:v\in A_i}\lambda_i\right)\mathrm H(X_v\mid X_{<v})\ge\sum_v\mathrm H(X_v\mid X_{<v})=\mathrm H(X).
\]
Since $\mathrm H(X)=\log|\Omega|$ and $\mathrm H(X_{A_i})\le\log|\pi_i(\Omega)|$, this 
proves the claim.
\end{proof}

\begin{lemma}\label{lem:global}
Let $H$ be a fixed graph with no isolated vertices, then every graph $G$ with $e(G)=m$ satisfies $M(G,H)\le(2m)^{\alpha^*(H)}$.
\end{lemma}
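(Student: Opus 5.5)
We apply Lemma~\ref{lem:shearer} to the set of embeddings, using an optimal fractional edge cover of $H$ as the weights. Let $\Omega$ be the set of all embeddings $\phi:V(H)\to V(G)$, regarded as assignments on the coordinate set $V(H)$, where coordinate $v$ takes the value $\phi(v)\in V(G)$. If $\Omega$ is empty there is nothing to prove. By the linear programming duality recorded above, we may choose weights $\lambda_e\ge 0$ for $e\in E(H)$ with $\sum_{e\ni v}\lambda_e\ge 1$ for every $v\in V(H)$ and $\sum_{e\in E(H)}\lambda_e=\alpha^*(H)$. An optimal solution exists because the program is a feasible linear program whose objective is bounded below by $0$, so the minimum is attained.

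For each edge $e=uv\in E(H)$, take the coordinate subset $A_e=\{u,v\}$. The projection $\pi_e(\Omega)$ consists of the pairs $(\phi(u),\phi(v))$ for $\phi\in\Omega$. Since every embedding sends $uv$ to an edge of $G$ with $\phi(u)\ne\phi(v)$, each such pair is an ordered pair of adjacent vertices of $G$. There are exactly $2m$ ordered adjacent pairs, so $|\pi_e(\Omega)|\le 2m$. The covering condition of Lemma~\ref{lem:shearer} is exactly the fractional edge-cover constraint. Applying the lemma gives
\[
M(G,H)=|\Omega|\le\prod_{e\in E(H)}|\pi_e(\Omega)|^{\lambda_e}\le (2m)^{\sum_e\lambda_e}=(2m)^{\alpha^*(H)}.
\]

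The only points needing care are bookkeeping. We need $2m\ge 1$ so that raising to nonnegative powers is monotone; this holds whenever $\Omega$ is nonempty, since $H$ has at least one edge. We also need the existence of an optimal dual solution, since we use equality in the cover value rather than a near-optimal cover; alternatively, a near-optimal cover with value $\alpha^*(H)+\eps$ followed by $\eps\to0$ suffices. No step is a genuine obstacle: the entire content is carried by Lemma~\ref{lem:shearer} together with LP duality.
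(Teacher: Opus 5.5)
Your proof is correct and follows the paper's argument exactly: Lemma~\ref{lem:shearer} is applied to the set of embeddings, with an optimal fractional edge cover as the weights, and each edge projection consists of at most $2m$ ordered adjacent pairs. Your extra bookkeeping (attainment of the LP optimum, nonemptiness of $\Omega$) is harmless, though the condition $2m\ge 1$ is not actually needed, since $t\mapsto t^{\lambda_e}$ is already monotone on $[0,\infty)$.
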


\begin{proof}
Choose a fractional edge cover $(\lambda_e)_{e\in E(H)}$ with $\sum_e\lambda_e=\alpha^*(H)$. Let $\Omega$ be the set of labeled embeddings of $H$ in $G$. For each edge $e\in E(H)$, project an embedding to the ordered image of $e$; this projection has size at most $2m$. By Lemma \ref{lem:shearer}, we have $M(G,H)=|\Omega|\le\prod_{e\in E(H)}(2m)^{\lambda_e}=(2m)^{\alpha^*(H)}$.
\end{proof}

\begin{lemma}\label{lem:balanced}
If $B$ is a fixed graph without isolated vertices and $\alpha^*(B)=|V(B)|/2=r$, then $M(m,B)=(1+o(1)) 2^rm^r$.
\end{lemma}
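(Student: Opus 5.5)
The plan is to prove matching upper and lower bounds, each of the form $(1+o(1))2^r m^r$.

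\emph{Upper bound.} This follows directly from Lemma~\ref{lem:global}. Since $\alpha^*(B)=r$, every graph $G$ with $e(G)=m$ satisfies $M(G,B)\le (2m)^{r}=2^r m^r$. Hence $M(m,B)\le 2^r m^r$ exactly, with no error term.

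\emph{Lower bound.} Take a clique. Let $n=\max\{q:\binom q2\le m\}$. The complete graph $K_n$ has at most $m$ edges. As noted at the start of Section~\ref{sec:prelim}, the exact-$m$ and at-most-$m$ versions of $M(m,B)$ coincide. We may therefore pad $K_n$ with disjoint edges to reach exactly $m$ edges; this adds no isolated vertices and does not decrease the count. Every injective map $V(B)\to V(K_n)$ is an embedding of $B$, so
\[
M(m,B)\ge (n)_{2r}.
\]
By maximality, $\binom{n+1}{2}>m$, so $(n+1)^2>2m$ and $n\ge \sqrt{2m}-1$. Since $|V(B)|=2r$ is fixed, as $m\to\infty$ we get
\[
(n)_{2r}=(1+o(1))n^{2r}\ge (1+o(1))(\sqrt{2m}-1)^{2r}=(1+o(1))(2m)^r .
\]
Combining the two bounds gives $M(m,B)=(1+o(1))2^r m^r$.

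Neither step is a real obstacle; the one point needing care is the limiting bookkeeping. We use that $r$ is fixed, so that $(n)_{2r}/n^{2r}\to 1$, and that $n\to\infty$ as $m\to\infty$. All the substantive work is already done by the Shearer-based Lemma~\ref{lem:global}.
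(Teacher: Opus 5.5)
Your proposal is correct and follows the paper's proof essentially verbatim: the upper bound comes directly from the Shearer-based Lemma~\ref{lem:global}, and the lower bound from the clique $K_n$ with $\binom n2\le m<\binom{n+1}2$, padded up to exactly $m$ edges. Your explicit estimate $n\ge\sqrt{2m}-1$ and your choice of disjoint edges as padding simply spell out details that the paper leaves implicit.
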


\begin{proof}
The upper bound is Lemma \ref{lem:global}. For the lower bound, take $K_n$ with $\binom n2\le m<\binom{n+1}2$. Then $n=(1+o(1))\sqrt{2m}$, and every injection from $V(B)$ to $V(K_n)$ is an embedding. Hence $M(K_n,B)=(n)_{|V(B)|}=(1+o(1))(2m)^{|V(B)|/2}=(1+o(1))2^rm^r$. Using the equivalence between the formulations with $e(G)\le m$ and $e(G)=m$, this gives the same lower bound for $M(m,B)$.
\end{proof}

\begin{lemma}\label{lem:beta}
For every $S\subseteq V(H)$, one has $\beta(S)\le\alpha$.
\end{lemma}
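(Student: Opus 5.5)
The plan is to build a feasible fractional independent set of $H$ from an optimal one for $B_S$, giving weight $1$ to the vertices of $Z_S$, and then check that its objective value is $\beta(S)$. Since $\alpha=\alpha^*(H)$ is a maximum, this yields $\beta(S)\le\alpha$.

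Concretely, let $x'$ be an optimal fractional independent set for $B_S$, with $\sum_{v\in V(B_S)}x'_v=\alpha^*(B_S)$. If $B_S$ is empty, take no such vector; the convention $\alpha^*(B_S)=0$ makes this consistent. Define $x$ on $V(H)$ by
\[
x_v=\begin{cases} x'_v & v\in V(B_S),\\ 1 & v\in Z_S,\\ 0 & v\in S.\end{cases}
\]
Then $\sum_v x_v=|Z_S|+\alpha^*(B_S)=\beta(S)$.

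To check feasibility, take an edge $uv$ of $H$ and split into cases according to where its endpoints lie.
\begin{enumerate}[(i)]
\item If both endpoints lie in $R_S\setminus Z_S$, the edge belongs to $B_S=H[R_S\setminus Z_S]$, so $x'_u+x'_v\le 1$ by feasibility of $x'$.
\item No edge has an endpoint in $Z_S$ and its other endpoint in $R_S$, because vertices of $Z_S$ are isolated in $H[R_S]$. So every edge with an endpoint in $Z_S$ has its other endpoint in $S$, and the sum is $1+0=1$.
\item Every remaining edge has an endpoint in $S$, and its other endpoint has weight at most $1$, so the sum is at most $1$.
\end{enumerate}
Nonnegativity is clear in all cases.

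There is no real obstacle here. The only point needing care is the case analysis showing that $Z_S$ sends no edges into $R_S$, which is exactly the definition of $Z_S$.
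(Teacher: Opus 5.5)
Your proposal is correct and is essentially the paper's own proof: extend an optimal fractional independent set of $B_S$ by weight $1$ on $Z_S$ and $0$ on $S$, check feasibility edge by edge, and read off the objective value $|Z_S|+\alpha^*(B_S)$. The bound $x'_v\le 1$ used in case (iii) holds because $B_S$ has no isolated vertices, so each vertex of $B_S$ lies in some constraint $x'_u+x'_v\le 1$ with $x'_u\ge 0$.
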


\begin{proof}
Let $x$ be an optimal fractional independent set on $B_S$. Define a vector $\widetilde x$ on $V(H)$ by setting $\widetilde x_v=0$ for $v\in S$, $\widetilde x_v=1$ for $v\in Z_S$, and $\widetilde x_v=x_v$ for $v\in V(B_S)$. If an edge of $H$ lies inside $B_S$, feasibility follows from $x$. If an edge has an endpoint in $S$, that endpoint contributes $0$. No edge of $H[R_S]$ can have an endpoint in $Z_S$, by the definition of $Z_S$. Therefore $\widetilde x$ is feasible for $H$, and its objective value is $|Z_S|+\alpha^*(B_S)$. Hence $\beta(S)\le\alpha$.
\end{proof}

\section{Proof of Theorem \ref{thm:main}}\label{sec:proof}

In this section, we fix a finite simple graph $H$ without isolated vertices and write $\alpha=\alpha^*(H)$. We first prove the lower bound.

\begin{lemma}\label{lem:lower}
For every fixed graph $H$ without isolated vertices, we have \[ \liminf_{m\to\infty}M(m,H)/m^\alpha\ge\Lambda(H). \]
\end{lemma}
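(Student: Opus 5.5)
The plan is a direct construction. Fix a finite core $F$ and a vector $y\ge 0$ with $s(y)\le 1$; we may assume $s(y)<1$, or else handle $s(y)=1$ by continuity of $\Phi_H(F,\cdot)$ (the admissible terms with $B_S$ empty do not depend on $1-s(y)$, and the others vanish). It suffices to show that $\liminf_m M(m,H)/m^\alpha\ge\Phi_H(F,y)$, after which we take the supremum over $(F,y)$. If $\Lambda(H)=\infty$, the same argument shows that the liminf is infinite.

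\textbf{Host graph.} For large $m$, build $G_m$ as follows. Take a copy of $F$ on the vertex set $U=V(F)$. For each nonempty $A\subseteq V(F)$, add a set $W_A$ of $\floor{y_Am}$ new vertices, each joined to every vertex of $A$; this uses $\sum_A|A|\floor{y_Am}\le s(y)m$ edges. Finally, add a disjoint clique $K_n$ with $\binom n2\le m-s(y)m-e(F)$, so that $n=(1+o(1))\sqrt{2(1-s(y))m}$. Then $e(G_m)\le m$, and by the monotonicity remark in Section~\ref{sec:prelim} it suffices to bound $M(G_m,H)$ from below.

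\textbf{Counting embeddings.} For each admissible $S$ and each core embedding $\psi:H[S]\hookrightarrow F$, we count embeddings $\phi$ of $H$ with $\phi|_S=\psi$, with $\phi(Z_S)\subseteq\bigcup_A W_A$, and with $\phi(V(B_S))\subseteq V(K_n)$. The data $(S,\psi)$ can be read off from $\phi$, since $S=\phi^{-1}(U)$ and $\psi=\phi|_S$. Hence embeddings arising from distinct pairs $(S,\psi)$ are distinct, and we may simply add the counts.

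The constraints are checked as follows. Edges inside $S$ are realized by $\psi$. A vertex $z\in Z_S$ has all its neighbours in $S$, so it suffices to send $z$ to a vertex of $W_A$ with $A\supseteq\Gamma_\psi(z)$. The number of available choices is $\sum_{A\supseteq\Gamma_\psi(z)}\floor{y_Am}=(Y_{\Gamma_\psi(z)}+o(1))m$, and the requirement that distinct vertices of $Z_S$ receive distinct images costs only $O(1)$ per vertex.

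Next, $B_S$ is embedded injectively into $K_n$, giving $(n)_{|V(B_S)|}=(1+o(1))[2(1-s(y))m]^{|V(B_S)|/2}$ choices. The main obstacle is the edges between $V(B_S)$ and $S$, or between $V(B_S)$ and $Z_S$. No such edges go to $Z_S$, because $Z_S$ is isolated in $H[R_S]$. Edges from $V(B_S)$ to $S$, however, cannot be realized, since $K_n$ is disjoint from the core.

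I expect to resolve this by using admissibility, which should force $B_S$ to have no neighbours in $S$. Suppose a vertex $b\in V(B_S)$ has a neighbour in $S$. Take the fractional independent set $\widetilde x$ from Lemma~\ref{lem:beta}, with $x\equiv 1/2$ on the balanced graph $B_S$, which is optimal there. This $\widetilde x$ achieves $\beta(S)=\alpha$ and is therefore optimal for $H$. One would then try to move weight from $S$ to contradict optimality, but optimality does not obviously fail. If it does not, the fallback is to modify the construction so that the clique part is joined to the relevant core vertices. That join costs $O(n)=O(\sqrt m)$ edges per core vertex, which is negligible, so the simplest choice is to join every vertex of $K_n$ to every vertex of $U$. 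This realizes all $S$–$B_S$ edges for every $(S,\psi)$ at once, at a cost of $|U|\,n=o(m)$ extra edges, which is absorbed by shrinking $n$ slightly.

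\textbf{Conclusion.} With this modification, the product of the counts gives
\[
M(G_m,H)\ge(1+o(1))\,m^{\alpha}\,\Phi_H(F,y),
\]
using $|Z_S|+|V(B_S)|/2=\beta(S)=\alpha$ for admissible $S$. Taking the liminf in $m$ and then the supremum over $(F,y)$ proves Lemma~\ref{lem:lower}.
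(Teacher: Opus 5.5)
Your final construction (a copy of $F$, classes $W_A$ of about $y_Am$ vertices joined exactly to $A$, and a clique completely joined to the core, with the $O(\sqrt m)$ join cost absorbed by shrinking $n$) is essentially the paper's proof. The paper gets its slack by scaling everything by $(1-\eta)$ and letting $\eta\to0$, whereas you use the exact edge budget plus continuity in $y$ for $s(y)=1$; both work. The join is not merely a fallback, though: admissible states can have edges between $V(B_S)$ and $S$, so your hoped-for argument from optimality could not have succeeded. For example, take $H$ with edges $sz_1,sz_2,sb_1,b_1b_2$ and $S=\{s\}$; then $Z_S=\{z_1,z_2\}$, $B_S$ is the balanced edge $b_1b_2$, and $\beta(S)=3=\alpha$, yet $b_1$ is adjacent to $s$.
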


\begin{proof}
Fix a finite graph $F$, a feasible vector $y$, and a number $0<\eta<1$. We first construct a graph with at most $m$ edges, beginning with a copy of $F$ as its core. Finally, we add the remaining edges. For each nonempty set $A\subseteq V(F)$, add a vertex class $P_A$ with $|P_A|=\floor{(1-\eta)y_A m}$, whose vertices are adjacent precisely to the vertices of $A$ in the core $F$. These classes contribute
\[
        \sum_{\emptyset\ne A\subseteq V(F)} |A|\,|P_A|=(1-\eta)s(y)m+O_F(1)
\]
edges incident with the core. Let $c=1-s(y)$. If $c>0$, then let $q_m$ be the maximum integer such that $\binom{q_m}{2}\le (1-\eta)cm$, and add a clique $Q_m$ on $q_m$ vertices. Then
\(
        q_m=(1+o(1))\sqrt{2(1-\eta)cm}\) and $e(Q_m)=(1-\eta)cm+O(\sqrt m)$.
If $c=0$, then $Q_m=\emptyset$. 
Then join every vertex of $Q_m$ to every vertex of $F$, which adds $O_F(\sqrt m)=o(m)$ edges. No further edges are added among the vertices in $\bigcup_A P_A$, between this union and $Q_m$, or between two distinct classes $P_A$ and $P_{A'}$. The graph constructed so far has at most $(1-\eta)m+o(m)$ edges, and hence at most $m$ edges for all sufficiently large $m$. 
Add a graph with the required number of edges and no isolated vertices on a vertex set disjoint from all previously used vertices.
This does not decrease the number of embeddings.

Fix an admissible state $S$ and an embedding $\psi:H[S]\hookrightarrow F$. Map the vertices of $S$ according to $\psi$. For each $z\in Z_S$, choose the image of $z$ from
$
        \bigcup_{A\supseteq \Gamma_\psi(z)} P_A,
$
which has size $(1-\eta)mY_{\Gamma_\psi(z)}+O_F(1)$. Map the vertices of $B_S$ into $Q_m$. If $B_S=\emptyset$, this gives one choice. If $B_S\ne\emptyset$ and $c>0$, the number of choices is
\[
        (q_m)_{|V(B_S)|}=(1+o(1))[2(1-\eta)cm]^{|V(B_S)|/2}.
\]
If $B_S\ne\emptyset$ and $c=0$, there are no such choices, in agreement with the factor $[2(1-\eta)c]^{|V(B_S)|/2}=0$. Since $Q_m$ is complete and every vertex of $Q_m$ is adjacent to every core vertex, all edges inside $B_S$ and all edges from $B_S$ to $S$ are present. The loss caused by possible collisions among the bounded number of vertices of $H$ is of lower order. As $S$ is admissible, $|Z_S|+|V(B_S)|/2=\alpha$, and the contribution associated with the pair $(S,\psi)$ is
\[
(1-\eta)^\alpha m^\alpha
\left(\prod_{z\in Z_S}Y_{\Gamma_\psi(z)}\right)
\bigl[2(1-s(y))\bigr]^{|V(B_S)|/2}+o(m^\alpha).
\]
The contributions from distinct pairs $(S,\psi)$ are disjoint: in this construction, $S$ is exactly the inverse image of the core $F$, and $\psi$ is the restriction of the embedding to $S$. Summing over all admissible $S$ and all embeddings $\psi:H[S]\hookrightarrow F$ gives
\[
        \liminf_{m\to\infty}\frac{M(m,H)}{m^\alpha}\ge (1-\eta)^\alpha\Phi_H(F,y).
\]
By letting $\eta\rightarrow0$ and then taking the supremum over $F$ and $y$, this proves the lemma.
\end{proof}

The preceding argument also gives the finiteness of $\Lambda(H)$. Fix $F$ and $y$ satisfying the constraints in Definition \ref{def:Lambda}. For every $0<\eta<1$, by Lemma \ref{lem:lower}, we have 
\[
        (1-\eta)^\alpha\Phi_H(F,y)\le \liminf_{m\to\infty}\frac{M(m,H)}{m^\alpha}.
\]
By Lemma \ref{lem:global}, $M(m,H)\le (2m)^\alpha$. Hence $(1-\eta)^\alpha\Phi_H(F,y)\le 2^\alpha$. Letting $\eta\rightarrow0$ gives $\Phi_H(F,y)\le 2^\alpha$, and taking the supremum over all $F$ and $y$ gives $\Lambda(H)\le 2^\alpha$. Consequently $\lambda(H)\le 2^\alpha/|\Aut(H)|$.

The next lemma is closely related to half-integrality and persistence phenomena in the Nemhauser--Trotter theory for vertex packing relaxations \cite{NemhauserTrotter1974,NemhauserTrotter1975}. We give a self-contained proof in the form needed here.

\begin{lemma}\label{lem:half}
Let $B$ be a fixed graph without isolated vertices and assume $\alpha^*(B)>|V(B)|/2$. Then there is an optimal fractional independent set with $x_v\in\{0,1/2,1\}$ for all $v$. If $D=\{v:x_v=0\}$, $I=\{v:x_v=1\}$, and $Q=\{v:x_v=1/2\}$, then $I$ is independent, $E_B(I,Q)=\emptyset$, $|I|>|D|$, there is a map $f:I\to D$ such that $\{i,f(i)\}\in E(B)$ for every $i\in I$, every $d\in D$ receives at least one vertex of $I$, some $d_0\in D$ receives at least two vertices of $I$, and $B[Q]$ is empty or balanced with no isolated vertices.
\end{lemma}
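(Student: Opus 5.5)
The plan is to follow the standard Nemhauser--Trotter route. First I get a half-integral optimum. Then I choose it extremally, so that the Hall-type surplus conditions hold. The listed structural properties should then come from the feasibility constraints and from local improvement arguments.

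\emph{Step 1 (half-integrality).} Start from any optimal $x$ of the program defining $\alpha^*(B)$. Let $U^-=\{v:0<x_v<1/2\}$ and $U^+=\{v:1/2<x_v<1\}$. For small $|\eps|$, consider $x^\eps$ defined by $x_v+\eps$ on $U^-$, $x_v-\eps$ on $U^+$, and $x_v$ elsewhere. Check tightness edge by edge.
\begin{itemize}
\item A tight edge $x_u+x_v=1$ with a fractional endpoint not equal to $1/2$ has one endpoint in $U^-$ and the other in $U^+$, so it stays tight.
\item Non-tight edges survive a small perturbation.
\end{itemize}
Hence $x^{\eps}$ is feasible for all $|\eps|$ small. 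The objective changes linearly in $\eps$ and $x$ is optimal, so it is constant in $\eps$. Push $\eps$ until some coordinate reaches $0$, $1/2$ or $1$. This strictly decreases $|U^-\cup U^+|$, and iterating gives an optimal half-integral $x$. (Alternatively, I would simply cite half-integrality of extreme points of the vertex packing polytope \cite{NemhauserTrotter1974}.)

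\emph{Step 2 (extremal choice and easy properties).} Among optimal half-integral solutions, choose one with $|D|$ minimal. The basic properties follow directly.
\begin{itemize}
\item $I$ is independent and $E_B(I,Q)=\emptyset$, since otherwise an edge would have weight sum exceeding $1$.
\item Every $i\in I$ has a neighbour (no isolated vertices), and that neighbour must lie in $D$.
\item The objective is $|I|+|Q|/2>|V(B)|/2=(|I|+|D|+|Q|)/2$, which gives $|I|>|D|$.
\item No vertex of $Q$ is isolated in $B[Q]$. Otherwise all its neighbours lie in $D$, and raising it to $1$ improves the objective.
\item $B[Q]$ is balanced. If $\alpha^*(B[Q])>|Q|/2$, replace $x|_Q$ by an optimal solution of $B[Q]$. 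This stays feasible because $Q$ has no edges to $I$ and $D$ has weight $0$, and it strictly improves the objective, a contradiction.
\end{itemize}

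\emph{Step 3 (the map $f$).} This is the main point. I claim strict Hall surplus: $|N_B(D')\cap I|>|D'|$ for every nonempty $D'\subseteq D$. If not, set $x$ to $1/2$ on $D'\cup (N_B(D')\cap I)$.
\begin{itemize}
\item Feasibility: vertices of $D'$ now see neighbours of value at most $1/2$. The lowered $I$-vertices only have neighbours in $D$, of value $0$ or $1/2$.
\item Objective: it changes by $(|D'|-|N_B(D')\cap I|)/2\ge 0$. A strict increase contradicts optimality; equality gives an optimal half-integral solution with smaller $D$, contradicting minimality.
\end{itemize}
By Hall's theorem there is a matching saturating $D$ into $I$; define $f$ on matched $I$-vertices as their partner. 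Each unmatched $i\in I$ has a neighbour in $D$ by Step 2, so send it there. Then $f$ is onto $D$. Since $|I|>|D|$, some unmatched vertex exists and its image $d_0$ receives at least two vertices of $I$.

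I expect the only delicate point to be Step 3: choosing the right extremal solution and checking feasibility of the modified vector. The remaining properties are routine.
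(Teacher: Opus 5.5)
Your proof is correct and follows the same route as the paper: a half-integral optimum, local-improvement arguments for the structural properties, and Hall's theorem plus reassignment of the unmatched $I$-vertices. There are two differences. You get half-integrality from the sliding perturbation on $U^-\cup U^+$, whereas the paper uses an extreme-point argument (a bipartite component of the tight-edge graph would contradict extremality). And your minimal-$|D|$ choice giving strict Hall surplus is more than the lemma needs: the paper gets the weak condition $|N_I(X)|\ge|X|$ from optimality alone, and this together with $|I|>|D|$ already produces $f$ and $d_0$.

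Two details should be stated explicitly. In Step 1, the perturbation stays feasible for every $\eps$ up to the first moment a coordinate reaches $0$, $1/2$ or $1$, not just for small $|\eps|$, because no non-tight edge can become tight earlier. In the balancedness bullet, the new values on $Q$ are at most $1$ because $B[Q]$ has no isolated vertices, and this is what keeps the edges between $Q$ and $D$ feasible.
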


\begin{proof}
Since $B$ has no isolated vertices, every feasible coordinate is at most $1$. Choose an optimal solution $x$ that is an extreme point of the feasible polytope. Such a point exists because the feasible polytope is nonempty and bounded. Let $T$ be the graph on the vertices with $0<x_v<1$, where two such vertices are adjacent in $T$ exactly when the corresponding edge constraint of $B$ is tight, that is, when $x_u+x_v=1$.

Suppose that some component $C$ of $T$ is bipartite, with bipartition $A\cup A'$. Choose $\delta>0$ smaller than every coordinate that is decreased, smaller than the slack of every non-tight edge with exactly one changed end vertex, and smaller than one half of the slack of every non-tight edge whose two end vertices are changed in the same direction. Then both perturbations obtained by adding $\delta$ on $A$ and subtracting $\delta$ on $A'$, and by subtracting $\delta$ on $A$ and adding $\delta$ on $A'$, are feasible. Tight constraints inside $C$ are preserved as equalities, and all other constraints remain satisfied by the choice of $\delta$. Hence $x$ is the midpoint of two distinct feasible points, a contradiction to extremality. Therefore every component of $T$ contains an odd cycle. The equations $x_u+x_v=1$ along such a component force all coordinates in that component to be $1/2$. All remaining coordinates are $0$ or $1$.

If two vertices of $I$ were adjacent, the corresponding edge constraint would have left side $2$. If a vertex of $I$ were adjacent to a vertex of $Q$, the corresponding edge constraint would have left side $3/2$. Hence $I$ is independent and $E_B(I,Q)=\emptyset$. Since $\alpha^*(B)=|I|+|Q|/2$ and $|V(B)|/2=(|D|+|I|+|Q|)/2$, the assumption $\alpha^*(B)>|V(B)|/2$ gives $|I|>|D|$.

For $X\subseteq D$, let $N_I(X)$ be the set of vertices of $I$ adjacent to at least one vertex of $X$. If $|N_I(X)|<|X|$ for some $X$, change the coordinates in $X$ from $0$ to $1/2$ and the coordinates in $N_I(X)$ from $1$ to $1/2$, leaving all other coordinates unchanged. The resulting vector is feasible: edges inside $X$ have sum at most $1$, edges from $X$ to $D\setminus X$ have sum $1/2$, edges from $X$ to $Q$ have sum $1$, every edge from $X$ to $I$ ends in $N_I(X)$ and has sum $1$, and vertices in $N_I(X)$ have no neighbors in $I\cup Q$. The objective value increases by $(|X|-|N_I(X)|)/2$, a contradiction. Thus $|N_I(X)|\ge |X|$ for every $X\subseteq D$.

By Hall's theorem \cite[Section 2.1]{Diestel2017}, there is a matching from $D$ into $I$ that saturates $D$. Assign to each matched vertex of $I$ the corresponding vertex of $D$. Every unmatched vertex of $I$ has a neighbor in $D$, since $B$ has no isolated vertices and $I$ has no neighbors in $I\cup Q$; assign each such vertex to one of its neighbors in $D$. Since $|I|>|D|$, at least one vertex $d_0\in D$ receives at least two vertices of $I$.

If $q\in Q$ were isolated in $B[Q]$, then $q$ would have no neighbor in $I$ and no neighbor in $Q$; changing $x_q$ from $1/2$ to $1$ would preserve feasibility and increase the objective value. Hence, $B[Q]$ is empty or has no isolated vertices. If $B[Q]$ were non-balanced, then replacing the value $1/2$ on $Q$ by a fractional independent set on $B[Q]$ with total weight larger than $|Q|/2$, while keeping the values $0$ on $D$ and $1$ on $I$, would give a feasible vector for $B$ with larger objective value. This relies on the fact that there are no edges between $I$ and $Q$, and that every coordinate in a feasible fractional independent set on $B[Q]$ is at most $1$ when $B[Q]$ has no isolated vertices. 
Therefore $B[Q]$ is either empty, or it is balanced and has no isolated vertices.
\end{proof}

The variational value is positive. If $H$ is balanced, then $S=\emptyset$ is admissible, and the empty core gives a positive term in $\Phi_H(F,y)$. If $H$ is non-balanced, apply Lemma \ref{lem:half} with $B=H$ and let $S=D$. Then $Z_S=I$, while $B_S=H[Q]$ is empty or balanced, and
\[
        \beta(S)=|I|+|Q|/2=\alpha^*(H)=\alpha.
\]
Thus $S$ is admissible. Taking a core containing a copy of $H[D]$ and choosing a sufficiently small positive value of $y_{V(F)}$ gives a positive contribution to $\Phi_H(F,y)$. Hence $\Lambda(H)>0$.

\begin{lemma}\label{lem:promote}
Let $G$ be an $m$-edge graph, let $U\subseteq V(G)$ be finite, and set $L=V(G)\setminus U$. Fix $S\subseteq V(H)$ and an embedding $\psi:H[S]\hookrightarrow G[U]$. Suppose that $\beta(S)=\alpha$ and that $B_S$ is non-balanced. Regard the sets $D,I,Q$ supplied by Lemma \ref{lem:half}, applied to $B_S$, as subsets of $V(H)\setminus S$. Fix the map $f:I\to D$ and a vertex $d_0\in D$ with $q_{d_0}=|f^{-1}(d_0)|\ge 2$ as in Lemma \ref{lem:half}. For $i\in I$, define
\[
        P_i(\psi)=\{v\in L:\psi(N_H(i)\cap S)\subseteq N_G(v)\},
\]
with $P_i(\psi)=L$ if $N_H(i)\cap S=\emptyset$. For $x\in L$, set
$
        D_i^\psi(x)=|N_{G[L]}(x)\cap P_i(\psi)|
$.
For $0<\tau<1$, let
\[
        W_\psi(\tau)=\{x\in L:\text{there exists } i\in f^{-1}(d_0) \text{ with }D_i^\psi(x)\ge \tau m\}.
\]
Then there exists a constant $C_H$, depending only on $H$, such that $|W_\psi(\tau)|\le 2|V(H)|/\tau$, and the number of embeddings $\phi:H\hookrightarrow G$ with $\phi|_S=\psi$, $\phi(V(H)\setminus S)\subseteq L$, and $\phi(d_0)\notin W_\psi(\tau)$ is at most $C_H\tau m^\alpha$.
\end{lemma}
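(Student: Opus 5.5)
The bound on $|W_\psi(\tau)|$ comes from a degree-sum argument. The count of embeddings comes from a direct product estimate in which every vertex of $V(H)\setminus S$ is charged at most its share of the exponent $\alpha=\beta(S)=|Z_S|+\alpha^*(B_S)$, except that one vertex of $f^{-1}(d_0)$ is charged only $\tau m$ instead of $2m$. Throughout, write $D,I,Q$ for the sets and $f$ for the map supplied by Lemma \ref{lem:half} applied to $B_S$.

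\emph{Step 1: the size of $W_\psi(\tau)$.} Fix $i\in f^{-1}(d_0)$. Since $D_i^\psi(x)\le d_{G[L]}(x)$, we get $\sum_{x\in L}D_i^\psi(x)\le 2e(G[L])\le 2m$. By Markov's inequality, at most $2/\tau$ vertices $x$ satisfy $D_i^\psi(x)\ge\tau m$. Taking the union over the at most $|V(H)|$ choices of $i$ gives $|W_\psi(\tau)|\le 2|V(H)|/\tau$.

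\emph{Step 2: counting completions.} Consider an embedding $\phi$ with $\phi|_S=\psi$ and $\phi(V(H)\setminus S)\subseteq L$. We keep only some of the edge constraints, which can only increase the count, and bound the number of choices group by group.

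\begin{enumerate}[(a)]
\item Each $z\in Z_S$ is sent into $L$, and $|L|\le 2m$ because $G$ has no isolated vertices. This gives a factor $(2m)^{|Z_S|}$.
\item $B[Q]$ is empty or balanced. The restriction $\phi|_Q$ is an embedding of $B_S[Q]$ into $G[L]$, so Lemma \ref{lem:global} gives at most $(2m)^{|Q|/2}$ choices.
\item Fix $d\in D\setminus\{d_0\}$ and pick one $i_d\in f^{-1}(d)$, which exists because every $d$ receives at least one vertex of $I$. Since $\{i_d,d\}\in E(B_S)$, the pair $(\phi(d),\phi(i_d))$ is an ordered edge of $G[L]$, so it has at most $2m$ choices. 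Each remaining vertex of $f^{-1}(d)$ has at most $2m$ choices. This group contributes $(2m)^{|f^{-1}(d)|}$.
\item For $d_0$, pick distinct $i_1,i_2\in f^{-1}(d_0)$. The ordered edge $(\phi(d_0),\phi(i_1))$ of $G[L]$ has at most $2m$ choices, and it fixes $\phi(d_0)\notin W_\psi(\tau)$.
  \begin{itemize}
  \item The vertex $\phi(i_2)$ lies in $L$, is adjacent to $\phi(d_0)$, and is adjacent to every vertex of $\psi(N_H(i_2)\cap S)$, so $\phi(i_2)\in P_{i_2}(\psi)$. Hence it has at most $D_{i_2}^\psi(\phi(d_0))<\tau m$ choices, by the definition of $W_\psi(\tau)$.
  \item Every other vertex of $f^{-1}(d_0)$ has at most $2m$ choices.
  \end{itemize}
  This group contributes at most $\tau(2m)^{|f^{-1}(d_0)|}$.
\end{enumerate}

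\emph{Step 3: the exponent.} Multiplying these factors, the exponent of $m$ is
$$|Z_S|+|Q|/2+\sum_{d\in D}|f^{-1}(d)|=|Z_S|+|Q|/2+|I|.$$
Since $x$ is optimal for $B_S$, we have $|I|+|Q|/2=\alpha^*(B_S)$, so this exponent equals $\beta(S)=\alpha$. The total count is therefore at most $2^{|V(H)|}\tau m^\alpha$, and we may take $C_H=2^{|V(H)|}$.

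\emph{Main obstacle.} The delicate point is the accounting that makes the exponent come out to exactly $\alpha$. Each vertex of $D$ must be absorbed for free into an edge together with one of its $f$-preimages, which uses that $f$ is surjective onto $D$ with $\{i,f(i)\}\in E(B_S)$. The $\tau$ saving also requires that $d_0$ have a second preimage $i_2$, which is the condition $q_{d_0}\ge 2$. Finally, the set $P_{i_2}(\psi)$ in the definition of $D_i^\psi$ must correctly encode the adjacency of $i_2$ to $S$.
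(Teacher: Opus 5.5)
Your proof is correct and follows essentially the same route as the paper. Both arguments use a degree-sum (Markov) bound for $|W_\psi(\tau)|$, then a product estimate over $Z_S$, over $Q$ (via Lemma~\ref{lem:global}), and over the stars $\{d\}\cup f^{-1}(d)$, with the factor $\tau$ supplied by a second preimage of $d_0$. The only difference is cosmetic: you bound each star by one ordered edge of $G[L]$ times $|L|\le 2m$ per extra leaf (using that $G$ has no isolated vertices), whereas the paper uses the degree-power sums $\sum_x d_{G[L]}(x)^{q_d}$ and $(\tau m)^{q_{d_0}-1}\sum_x D_{i_0}^\psi(x)$, and both give exponent exactly $\alpha$.
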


\begin{proof}
For a fixed $i\in I$, the sum $\sum_{x\in L}D_i^\psi(x)$ counts incidences in $G[L]$ between a vertex of $P_i(\psi)$ and an edge incident with that vertex. Hence
\[
        \sum_{x\in L}D_i^\psi(x)=\sum_{u\in P_i(\psi)}d_{G[L]}(u)\le 2e(G[L])\le 2m.
\]
Thus at most $2/\tau$ vertices $x$ satisfy $D_i^\psi(x)\ge\tau m$. Taking the union over $i\in f^{-1}(d_0)$ gives $|W_\psi(\tau)|\le 2|V(H)|/\tau$.

It remains to count a superset of the desired embeddings. We retain only the necessary conditions used below and ignore injectivity and all remaining edge constraints. Each vertex $z\in Z_S$ is isolated in $H[R_S]$; since $H$ has no isolated vertices, $z$ has at least one neighbor $s_z\in S$. Once $\psi$ is fixed, the image of $z$ must lie in $N_G(\psi(s_z))$, which has size at most $m$. Thus the vertices in $Z_S$ contribute at most $m^{|Z_S|}$ choices. The graph $B_S[Q]$ is empty or balanced with no isolated vertices by Lemma \ref{lem:half}; by Lemma \ref{lem:global}, the vertices of $Q$ contribute at most $(2m)^{|Q|/2}$ choices, with value $1$ when $Q=\emptyset$.

For each $d\in D$, write $q_d=|f^{-1}(d)|$. If the image of $d$ is $x$, then the vertices $i\in f^{-1}(d)$ have at most $\prod_{i\in f^{-1}(d)}D_i^\psi(x)$ possible images. For $d\ne d_0$, using $D_i^\psi(x)\le d_{G[L]}(x)$ and $q_d\ge 1$ gives
\[
\sum_{x\in L}\prod_{i\in f^{-1}(d)}D_i^\psi(x)
\le\sum_{x\in L}d_{G[L]}(x)^{q_d}
\le\left(\sum_{x\in L}d_{G[L]}(x)\right)^{q_d}
\le(2m)^{q_d}.
\]
For $d_0$, under the condition $\phi(d_0)\notin W_\psi(\tau)$, choose one $i_0\in f^{-1}(d_0)$. Since $q_{d_0}\ge2$, every $x\notin W_\psi(\tau)$ satisfies $D_i^\psi(x)<\tau m$ for all $i\in f^{-1}(d_0)$, and therefore
\[
\sum_{x\notin W_\psi(\tau)}\prod_{i\in f^{-1}(d_0)}D_i^\psi(x)
\le (\tau m)^{q_{d_0}-1}\sum_{x\in L}D_{i_0}^\psi(x)
\le 2\tau m^{q_{d_0}}.
\]
Multiplying the bounds over $D$, $I$, $Q$, and $Z_S$ gives at most
$        C_H\tau m^{|Z_S|+\sum_{d\in D}q_d+|Q|/2}.$
Since $\sum_{d\in D}q_d=|I|$ and $|I|+|Q|/2=\alpha^*(B_S)$, the exponent is $|Z_S|+\alpha^*(B_S)=\beta(S)=\alpha$.
\end{proof}

\begin{lemma}\label{lem:sync}
Given an $m$-edge graph $G$, an integer $R\ge1$, and a number $\eta>0$, there are finite sets
\[
        U_0\subseteq U_1\subseteq\cdots\subseteq U_R\subseteq V(G),
\]
whose sizes depend only on $H,R,\eta$, and not on $m$, such that, for some $j\in\{0,\ldots,R-1\}$, the following holds. Define $S_j(\phi)=\{v\in V(H):\phi(v)\in U_j\}$ for an embedding $\phi:H\hookrightarrow G$. Apart from at most
\[
        \eta m^\alpha+|V(H)|(2m)^\alpha/R
\]
embeddings $\phi$ satisfying $\beta(S_j(\phi))=\alpha$, every such embedding has an admissible state $S_j(\phi)$.
\end{lemma}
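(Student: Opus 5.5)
The plan is to build the cores by iterating Lemma~\ref{lem:promote}, and then to pick a good level $j$ by pigeonhole. For each $S\subseteq V(H)$ with $\beta(S)=\alpha$ and $B_S$ non-balanced, fix once and for all the data $D,I,Q,f,d_0$ from Lemma~\ref{lem:half} applied to $B_S$. Set $U_0=\emptyset$ and define $U_{j+1}$ inductively from $U_j$. Put
\[
\tau_j=\frac{\eta}{C_H\,2^{|V(H)|}\,\max(1,|U_j|)^{|V(H)|}},
\]
and let
\[
U_{j+1}=U_j\cup\bigcup_{(S,\psi)}W_\psi(\tau_j).
\]
Here the union runs over all such non-balanced same-exponent states $S$ and all embeddings $\psi:H[S]\hookrightarrow G[U_j]$. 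The set $W_\psi(\tau_j)$ is formed with $U=U_j$ and $L=V(G)\setminus U_j$.

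The number of pairs $(S,\psi)$ is at most $2^{|V(H)|}\max(1,|U_j|)^{|V(H)|}$. Each $W_\psi(\tau_j)$ has size at most $2|V(H)|/\tau_j$ by Lemma~\ref{lem:promote}. Hence $|U_{j+1}|$ is bounded by a function of $|U_j|$, $H$ and $\eta$ alone. By induction, all sizes depend only on $H,R,\eta$.

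Fix a level $j\le R-1$ and call an embedding $\phi$ bad at $j$ if $\beta(S_j(\phi))=\alpha$ and $S_j(\phi)$ is not admissible, that is, $B_{S_j(\phi)}$ is non-balanced. Set $S=S_j(\phi)$ and $\psi=\phi|_S$. By definition of $S_j$, $\psi$ maps into $G[U_j]$ and $\phi(V(H)\setminus S)\subseteq L$, so we are exactly in the setting of Lemma~\ref{lem:promote}. There are two cases.
\begin{enumerate}[(a)]
\item If $\phi(d_0)\notin W_\psi(\tau_j)$, then $\phi$ is \emph{light}. Summing the bound $C_H\tau_j m^\alpha$ of Lemma~\ref{lem:promote} over all pairs $(S,\psi)$, the number of light bad embeddings at level $j$ is at most $\eta m^\alpha$, by the choice of $\tau_j$.
\item If $\phi(d_0)\in W_\psi(\tau_j)$, then $\phi(d_0)\in U_{j+1}$ while $d_0\notin S$. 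Hence $S_{j+1}(\phi)\supsetneq S_j(\phi)$, and we say $\phi$ is \emph{promoted} at $j$.
\end{enumerate}

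The chain $S_0(\phi)\subseteq S_1(\phi)\subseteq\cdots\subseteq S_R(\phi)$ can increase strictly at most $|V(H)|$ times. Hence the number of pairs $(\phi,j)$ with $\phi$ promoted at $j$ is at most $|V(H)|\,M(G,H)$, which is at most $|V(H)|(2m)^\alpha$ by Lemma~\ref{lem:global}. Averaging over the $R$ levels $j\in\{0,\dots,R-1\}$, some $j$ has at most $|V(H)|(2m)^\alpha/R$ promoted embeddings. At this $j$, every bad embedding is either light or promoted, which gives the claimed bound $\eta m^\alpha+|V(H)|(2m)^\alpha/R$.

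I expect the main obstacle to be the bookkeeping that makes the parameters independent of $m$. The threshold $\tau_j$ must be chosen after $|U_j|$ is known, so that the union bound over the $|U_j|^{O(1)}$ maps $\psi$ still gives $\eta m^\alpha$. The sets $U_{j+1}$ then grow only as a function of $H,\eta$ and $j$, which works because the level structure is built sequentially. A secondary point is to check that each embedding is assigned to exactly one $(S,\psi)$ at each level, namely $S=S_j(\phi)$ and $\psi=\phi|_S$, so no double counting issue arises.
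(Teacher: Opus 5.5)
Your proposal is correct and follows the paper's proof essentially step by step. Like the paper, you fix the Lemma~\ref{lem:half} data for each non-admissible same-exponent state, build $U_{j+1}$ by adding the sets $W_\psi(\tau_j)$ with $\tau_j$ scaled down by the number of pairs $(S,\psi)$, and pigeonhole over the $R$ levels using that $S_j(\phi)$ increases strictly at most $|V(H)|$ times. Two small remarks: you should cap $\tau_j$ (e.g.\ replace it by $\min\{1/2,\tau_j\}$, as the paper does), since Lemma~\ref{lem:promote} is stated only for $0<\tau<1$ and your $\tau_j$ exceeds $1$ when $\eta$ is large. Your omission of the paper's extra factor $1/R$ in $\tau_j$ is fine, because only the single chosen level matters.
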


\begin{proof}
For every $S\subseteq V(H)$ with $\beta(S)=\alpha$ and non-balanced $B_S$, fix one choice of the sets $D_S,I_S,Q_S$, the map $f_S:I_S\to D_S$, and the vertex $d_S\in D_S$ supplied by Lemma \ref{lem:half}. Let $h=|V(H)|$, set $U_0=\emptyset$, and set $K_0=0$.

Assume that $U_j$ has been constructed and satisfies $|U_j|\le K_j$. Let $N_j=2^h\max(1,K_j)^h$, which bounds the number of pairs $(S,\psi)$ with $S\subseteq V(H)$ and $\psi:H[S]\hookrightarrow G[U_j]$. Choose
\[
        \tau_j=\min\{1/2,\eta/(C_HRN_j)\}.
\]
Then $C_HN_j\tau_j\le \eta/R$. For every pair $(S,\psi)$ with $\beta(S)=\alpha$ and non-balanced $B_S$, apply Lemma \ref{lem:promote} with the fixed data for $S$, with $U=U_j$, $L=V(G)\setminus U_j$, and parameter $\tau_j$. Add all corresponding sets $W_\psi(\tau_j)$ to the core and call the resulting set $U_{j+1}$. Since each such set has size at most $2h/\tau_j$, we have
\[
        |U_{j+1}|\le K_j+2hN_j/\tau_j.
\]
Define $K_{j+1}=K_j+2hN_j/\tau_j$. It follows by induction that every $U_j$ has size bounded only in terms of $H,R,\eta$.

For a pair $(S,\psi)$ considered at level $j$, call an embedding $\phi$ light for $(S,\psi,j)$ if $\phi|_S=\psi$, $\phi(V(H)\setminus S)\subseteq V(G)\setminus U_j$, and $\phi(d_S)\notin W_\psi(\tau_j)$. Lemma \ref{lem:promote} gives at most $C_H\tau_jm^\alpha$ such embeddings for each fixed pair. Since there are at most $N_j$ pairs, the number of light embeddings at level $j$ is at most $C_HN_j\tau_jm^\alpha\le \eta m^\alpha/R$.

Fix an embedding $\phi:H\hookrightarrow G$. If $\beta(S_j(\phi))=\alpha$ and $B_{S_j(\phi)}$ is non-balanced, let $S=S_j(\phi)$ and $\psi=\phi|_S$. Then $\phi(V(H)\setminus S)\subseteq V(G)\setminus U_j$. If $\phi$ is not light for $(S,\psi,j)$, then $\phi(d_S)\in W_\psi(\tau_j)\subseteq U_{j+1}$. Since $d_S\in V(B_S)\subseteq V(H)\setminus S$, this implies $S_{j+1}(\phi)\supsetneq S_j(\phi)$. The sets $S_j(\phi)$ are increasing in $j$ and are subsets of $V(H)$, so this strict increase can occur for a fixed $\phi$ at most $h$ times.

Let $\cB_j$ be the set of embeddings $\phi$ such that $\beta(S_j(\phi))=\alpha$, the graph $B_{S_j(\phi)}$ is non-balanced, and $\phi$ is not light for the corresponding triple $(S_j(\phi),\phi|_{S_j(\phi)},j)$. The preceding paragraph gives
\[
        \sum_{j=0}^{R-1}|\cB_j|\le hM(G,H)\le h(2m)^\alpha,
\]
by Lemma \ref{lem:global}. Hence some $j$ satisfies $|\cB_j|\le h(2m)^\alpha/R$. At this level, the light embeddings contribute at most $\eta m^\alpha/R\le \eta m^\alpha$. Every remaining embedding with $\beta(S_j(\phi))=\alpha$ has $B_{S_j(\phi)}$ either empty or balanced; hence $S_j(\phi)$ is admissible.
\end{proof}

\begin{proof}[Proof of Theorem \ref{thm:main}]
By Proposition \ref{prop:alon-alpha}, $\gamma(H)=\alpha^*(H)=\alpha$. Since $M(m,H)=|\Aut(H)|N(m,H)$, it suffices to prove
\[
        \lim_{m\to\infty}\frac{M(m,H)}{m^\alpha}=\Lambda(H).
\]
The lower bound is given by Lemma \ref{lem:lower}. We prove the matching upper bound.

Fix $\eps>0$ and an $m$-edge graph $G$. Choose $R$ large so that $|V(H)|2^\alpha/R<\eps$, and apply Lemma \ref{lem:sync} with parameter $\eta=\eps$. Let $U=U_j$ be the core given by Lemma \ref{lem:sync}, and set $F=G[U]$ and $L=V(G)\setminus U$, and for each nonempty $A\subseteq U$ define
\[
        y_A=\frac{1}{m}|\{x\in L:N_G(x)\cap U=A\}|.
\]
Then
\(
        s(y)=e_G(U,L)/m\le1\)
an
\(
        e(G[L])=m-e_G(U,L)-e(G[U])\le (1-s(y))m.
\)

Classify embeddings $\phi:H\hookrightarrow G$ by the set $S=\phi^{-1}(U)$ and by the embedding $\psi=\phi|_S$. If there is a set $T\subseteq V(H)$ with $\beta(T)<\alpha$, define
\[
        \delta_H=\min\{\alpha-\beta(T):T\subseteq V(H),\ \beta(T)<\alpha\}>0;
\]
otherwise there are no states with $\beta(S)<\alpha$. For fixed $S$ and $\psi$ with $\beta(S)<\alpha$, the vertices of $Z_S$ have at most $m^{|Z_S|}$ choices, and the graph $B_S$ contributes at most $(2m)^{\alpha^*(B_S)}$ choices by Lemma \ref{lem:global}. Thus this pair contributes at most $C_Hm^{\beta(S)}\le C_Hm^{\alpha-\delta_H}$ whenever $\delta_H$ is defined. Since $U$ has size bounded in terms of $H$ and $\eps$, and since only finitely many $S$ occur, all states with $\beta(S)<\alpha$ contribute $o_{H,\eps}(m^\alpha)$ in total, uniformly over all $m$-edge graphs $G$.

By Lemma \ref{lem:sync}, the total number of embeddings with $\beta(S)=\alpha$ and non-balanced $B_S$ at the chosen level is at most
\[
        \eps m^\alpha+|V(H)|(2m)^\alpha/R\le 2\eps m^\alpha.
\]
It remains to count admissible states. Fix an admissible $S$ and an embedding $\psi:H[S]\hookrightarrow F$. For each $z\in Z_S$, the image of $z$ must have core-neighborhood containing $\Gamma_\psi(z)$, and hence there are at most $mY_{\Gamma_\psi(z)}$ choices. Since $B_S$ is empty or balanced, By Lemma \ref{lem:global} and by the empty-graph convention,
\[
        M(G[L],B_S)\le(2e(G[L]))^{|V(B_S)|/2}\le [2(1-s(y))m]^{|V(B_S)|/2}.
\]
Thus the contribution of this pair $(S,\psi)$ is at most
\[
        m^\alpha
        \left(\prod_{z\in Z_S}Y_{\Gamma_\psi(z)}\right)
        [2(1-s(y))]^{|V(B_S)|/2}.
\]
Summing over all admissible $S$ and all embeddings $\psi:H[S]\hookrightarrow F$ gives
\[
        M(G,H)\le m^\alpha\Phi_H(F,y)+2\eps m^\alpha+o_{H,\eps}(m^\alpha)
        \le m^\alpha\Lambda(H)+2\eps m^\alpha+o_{H,\eps}(m^\alpha).
\]
Since $G$ was arbitrary, $\limsup_{m\to\infty}M(m,H)/m^\alpha\le\Lambda(H)+2\eps$. Letting $\eps\rightarrow0$ and combining this with Lemma \ref{lem:lower} proves
\[
        \lim_{m\to\infty}\frac{M(m,H)}{m^\alpha}=\Lambda(H).
\]
Dividing by $|\Aut(H)|$ proves $\lim_{m\to\infty}N(m,H)/m^{\gamma(H)}=\lambda(H)$.
\end{proof}

\section{Proof of Theorem \ref{thm:alon-second}}\label{sec:star-forests}

For graphs $X$ and $F$, let $I(X,F)$ denote the number of vertex subsets of $X$ inducing a copy of $F$. We shall use the following finite form of a theorem of Brown and Sidorenko~\cite{BrownSidorenko1994}.

\begin{lemma}[Brown--Sidorenko~\cite{BrownSidorenko1994}]\label{lem:brown-sidorenko}
Let $T$ be a complete multipartite graph. For every $n$, the maximum of $I(X,T)$ over all $n$-vertex graphs $X$ is attained by an $n$-vertex complete multipartite graph.
\end{lemma}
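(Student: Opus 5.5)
The plan is to reduce the stated finite form to the theorem of Brown and Sidorenko, which is usually stated as an asymptotic or density statement, and to handle the exact finite case by a symmetrization (Zykov-type) argument applied directly to $I(X,T)$.

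\textbf{Setup.} Let $T$ be complete multipartite. Among all $n$-vertex graphs $X$ maximizing $I(X,T)$, choose one for which the number of pairs of non-adjacent vertices with identical neighbourhoods (twins) is as large as possible. The goal is to show that non-adjacency is then an equivalence relation. This is exactly the statement that $X$ is complete multipartite, since it means $\overline{X}$ is a disjoint union of cliques.

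\textbf{Key step.} Suppose $u,v$ are non-adjacent. Write $I(X,T)=A+a_u+a_v+c_{uv}$, where:
\begin{itemize}[nosep]
\item $A$ counts the induced copies avoiding both $u$ and $v$;
\item $a_u$ and $a_v$ count the copies containing exactly one of them;
\item $c_{uv}$ counts the copies containing both.
\end{itemize}
Replace $v$ by a non-adjacent twin of $u$, or $u$ by a twin of $v$. In either case $A$ is unchanged and the singleton terms become $2a_u$ or $2a_v$. The difficulty is the term $c_{uv}$ for copies using both vertices. Because $T$ is complete multipartite and $u,v$ are non-adjacent, any induced copy of $T$ containing both places them in the same part. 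Hence $u$ and $v$ have identical neighbourhoods within that copy, so the copy is also present after cloning.

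I expect this part to be the main obstacle. The aim is to prove $c'_{uv}\ge c_{uv}$ for one of the two cloning choices, or at least that the average of the two new counts is at least $I(X,T)$. The most direct route is to compare through the ``relative'' counts $c_{uv}$ versus the counts for $u$ with a twin of $u$. The hope is to use the submodular or convex structure of complete multipartite counts in the twin class sizes. If this fails, I would try an alternative route: fall back on Brown--Sidorenko's original argument, which shows that the induced density of a complete multipartite graph is maximized by complete multipartite graphs via exactly this twin-symmetrization, and quote it for fixed $n$.

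\textbf{Conclusion.} Assuming the averaging inequality holds, the standard Zykov iteration finishes the argument. If $u\not\sim v$ and $v\not\sim w$ but $u\sim w$, clone $v$ onto $u$ and $w$, or clone $u,w$ onto $v$. One of these choices does not decrease $I(X,T)$ and strictly increases the twin structure, contradicting the choice of $X$. Therefore non-adjacency is transitive, and $X$ is complete multipartite on $n$ vertices.
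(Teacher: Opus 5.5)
The paper gives no proof of this lemma; it imports it from Brown--Sidorenko, so your fallback option is what the paper actually does. A self-contained Zykov-type proof would therefore be a genuinely different and more elementary route. That route works, but your write-up does not yet carry it out.

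\textbf{The averaging inequality is already proved by your own remark.} You leave it as an assumption, but no convexity or submodularity is needed. Write $X_{v\to u}$ for the graph in which $v$ is made a non-adjacent twin of $u$. If $W\ni u,v$ induces $T$ in $X$, then $u,v$ lie in one part, so $N_X(u)\cap W=N_X(v)\cap W$. Hence $X_{v\to u}[W]=X[W]=X_{u\to v}[W]$. Both new counts of copies through $u$ and $v$ are therefore at least $c_{uv}$, and
\[
I(X_{v\to u},T)+I(X_{u\to v},T)\ge (A+2a_u+c_{uv})+(A+2a_v+c_{uv})=2I(X,T).
\]

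\textbf{The concluding Zykov step is the actual gap; it is not valid as stated.}
\begin{itemize}
\item ``Clone $v$ onto $u$ and $w$'' is meaningless when $u\sim w$, since a vertex cannot be a non-adjacent twin of two adjacent vertices.
\item Replacing $u,w$ by clones of $v$ can destroy more twin pairs (those in the twin classes of $u$ and $w$) than it creates. In Zykov's Tur\'an argument, strictness comes from the doubly counted edge $uw$, which has no analogue for $T$-counts.
\item Knowing only that \emph{one} direction does not decrease $I$ is too weak, since that direction may be the one that lowers your potential.
\end{itemize}

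The missing observation is this: at a maximizer, both summands above are at most $I(X,T)$, so both equal it. Hence cloning across any non-edge, in either direction, preserves maximality.

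With that in hand, argue with pairs rather than triples:
\begin{itemize}
\item Non-adjacent twinship is an equivalence relation.
\item Cloning $v$ onto $u$ changes only adjacencies at $v$. It preserves every twin pair avoiding $v$, because two such twins agree on adjacency to $u$.
\item So the cloning moves $v$ from its class $C_v$ into $C_u$, and increases the number of twin pairs by at least $|C_u|-|C_v|+1$.
\item If $X$ is not complete multipartite, some non-adjacent pair $u,v$ is not a twin pair; otherwise non-adjacency would be transitive.
\item Cloning toward the (weakly) larger class then contradicts your choice of $X$.
\end{itemize}

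Alternatively, drop the potential function altogether. Fix $v_1$ and clone all its non-neighbours onto $v_1$; then pick $v_2$ outside the resulting class and repeat. Each step preserves maximality and leaves the classes already built intact, so the process ends at a complete multipartite maximizer.
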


\begin{proof}[Proof of Theorem \ref{thm:alon-second}]
If $m=0$, take $S_0$ to be the empty graph. Hence assume $m\ge1$. Write
\[
        H=\bigsqcup_{i=1}^r K_{1,a_i},
        \qquad a_i\ge1,
\]
and set $F=\bigsqcup_{i=1}^r K_{a_i}$.

Let $G$ be an arbitrary graph with $m$ edges. Consider its line graph $L(G)$, whose vertices are the edges of $G$, with two vertices adjacent if and only if the corresponding edges of $G$ share an end vertex.

Every copy $Q$ of $H$ in $G$ determines the vertex set $E(Q)$ in $L(G)$. For each component $K_{1,a_i}$ of $Q$, its $a_i$ edges have a common end vertex, and hence form a clique $K_{a_i}$ in $L(G)$. Edges belonging to distinct components of $Q$ have no common end vertex, and therefore there are no edges in $L(G)$ between the corresponding cliques. Thus $L(G)[E(Q)]$ is an induced copy of $F$. Since $H$ has no isolated vertices, a copy of $H$ is determined by its edge set. Hence the map $Q\mapsto E(Q)$ is injective, and consequently $N(G,H)\le I(L(G),F).$

The complement $\overline F$ is the complete multipartite graph $K_{a_1,\ldots,a_r}$. By Lemma \ref{lem:brown-sidorenko}, there is an $m$-vertex complete multipartite graph $M$ such that $ I(Y,\overline F)\le I(M,\overline F)$ for every $m$-vertex graph $Y$. Therefore, for every $m$-vertex graph $X$,
\[
        I(X,F)=I(\overline X,\overline F)
        \le I(M,\overline F)
        = I(\overline M,F).
\]
Let $C=\overline M$. Then $C$ is a disjoint union of cliques, and $I(X,F)\le I(C,F)$ for every $m$-vertex graph $X$. In particular, $I(L(G),F)\le I(C,F)$.

Write $
        C=\bigsqcup_{j=1}^s K_{b_j},$ where $b_j\ge1$, and $\sum_{j=1}^s b_j=m$.
Define
$
        S_m=\bigsqcup_{j=1}^s K_{1,b_j}.
$
Then $S_m$ is a disjoint union of stars, $e(S_m)=m$, and $L(S_m)=C$.

We claim that $I(C,F)=N(S_m,H)$. Indeed, let $A\subseteq E(S_m)=V(C)$, and define $t_j=|A\cap E(K_{1,b_j})|$. Then
$
        C[A]\cong \bigsqcup_{j:t_j>0} K_{t_j},
$
while the edge subgraph of $S_m$ with edge set $A$ is
$
        \bigsqcup_{j:t_j>0} K_{1,t_j}.
$
Thus $C[A]\cong F$ if and only if this edge subgraph is isomorphic to $H$, since both conditions are equivalent to the multiset equality
$
        \{t_j:t_j>0\}=\{a_1,\ldots,a_r\}.
$
This gives a bijection between induced copies of $F$ in $C$ and copies of $H$ in $S_m$. Hence $I(C,F)=N(S_m,H)$. Combining the preceding inequalities gives
\[
        N(G,H)\le I(L(G),F)\le I(C,F)=N(S_m,H).
\]
Since $G$ was arbitrary, $N(m,H)\le N(S_m,H)$. The reverse inequality follows from $e(S_m)=m$. Hence $N(S_m,H)=N(m,H)$, as required.
\end{proof}

\section{Applications}\label{sec:examples}

Theorem \ref{thm:main} gives the constant in Conjecture \ref{conj:alon} as $b(H)=\lambda(H)=\Lambda(H)/|\Aut(H)|$. This section illustrates that the leading constants in several basic families are obtained directly by solving the variational problem over finite cores from Definition \ref{def:Lambda}. In each case we identify the admissible rooted states, evaluate the corresponding supremum in $\Phi_H(F,y)$, and divide by $|\Aut(H)|$ to pass from labeled embeddings to unlabeled copies. 
The balanced case recovers Alon's $D(H)=0$ constant~\cite{Alon1981}, the star forest examples relate to Alon's second conjecture~\cite{Alon1986} and Füredi's polynomial formulation for star forests~\cite{Furedi1992}, and the path examples recover the constants for paths of even length studied by Bollobás and Sarkar~\cite{BollobasSarkar2001,BollobasSarkar2003}.




\begin{proposition}\label{prop:balanced-Lambda}
If $H$ is balanced, then $\Lambda(H)=2^{|V(H)|/2}$, $\lambda(H)=2^{|V(H)|/2}/|\Aut(H)|$, and $N(m,H)= (1+o(1))2^{|V(H)|/2}m^{|V(H)|/2}/|\Aut(H)|$.
\end{proposition}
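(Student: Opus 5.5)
The plan is to get $\Lambda(H)=2^{|V(H)|/2}$ by pairing a lower bound from the variational problem with an upper bound taken from the limit theorem and the global Shearer bound. The statements about $\lambda(H)$ and $N(m,H)$ then follow formally from Theorem~\ref{thm:main} together with $M(m,H)=|\Aut(H)|N(m,H)$.

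Let $r=|V(H)|/2$. Since $H$ is balanced, $\alpha=\alpha^*(H)=r$.

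For the lower bound, take the empty core $F$ and $y=0$, so $s(y)=0$. The only embedding into the empty core is the embedding of the empty graph, so only $S=\emptyset$ contributes to $\Phi_H(F,y)$. For this state, $R_S=V(H)$, and $Z_S=\emptyset$ because $H$ has no isolated vertices. Hence $B_S=H$ is balanced and $\beta(\emptyset)=\alpha^*(H)=\alpha$, so $\emptyset$ is admissible. Its term is the empty product times $[2(1-0)]^{|V(H)|/2}=2^r$. This gives $\Lambda(H)\ge\Phi_H(\emptyset,0)=2^r$.

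For the upper bound, the remark after Lemma~\ref{lem:lower} already gives $\Lambda(H)\le 2^\alpha=2^r$. Alternatively, Theorem~\ref{thm:main} (in its labeled form, proved in Section~\ref{sec:proof}) gives $\Lambda(H)=\lim M(m,H)/m^\alpha$, and Lemma~\ref{lem:global} bounds this limit by $2^\alpha$. Lemma~\ref{lem:balanced} also confirms the value of the limit directly.

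Combining the two bounds gives $\Lambda(H)=2^r$. Dividing by $|\Aut(H)|$ gives the formula for $\lambda(H)$. Theorem~\ref{thm:main} with $\gamma(H)=\alpha=r$ then yields $N(m,H)=(1+o(1))\lambda(H)m^r$, which is the stated asymptotic; it can also be read off directly from Lemma~\ref{lem:balanced} divided by $|\Aut(H)|$.

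There is no real obstacle here. The only point needing care is checking that the empty core with $S=\emptyset$ really is admissible and contributes exactly $2^r$. This uses the convention that the empty graph has exactly one embedding into every core, and the fact that $Z_\emptyset=\emptyset$ because $H$ has no isolated vertices.
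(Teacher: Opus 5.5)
Your proposal is correct and follows essentially the same route as the paper. You get the lower bound from the empty core with the admissible state $S=\emptyset$, the upper bound $\Lambda(H)\le 2^\alpha$ from the finiteness remark after Lemma~\ref{lem:lower}, and the asymptotic for $N(m,H)$ from Theorem~\ref{thm:main}.
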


\begin{proof}
Let $r=|V(H)|/2=\alpha^*(H)$. Since $H$ has no isolated vertices, the state $S=\emptyset$ has $Z_S=\emptyset$ and $B_S=H$, so it is admissible. Taking the empty core $F=\emptyset$ and the unique feasible vector $y$ gives $s(y)=0$ and $\Phi_H(F,y)=2^r$. Hence $\Lambda(H)\ge 2^r$. The finiteness argument after Lemma \ref{lem:lower} gives $\Lambda(H)\le 2^\alpha=2^r$, so $\Lambda(H)=2^r$. The asymptotic formula for $N(m,H)$ follows from Theorem \ref{thm:main}.
\end{proof}

\begin{proposition}\label{prop:stars}
For $k\ge 2$, $\Lambda(K_{1,k})=1$, $\lambda(K_{1,k})=1/k!$, and $N(m,K_{1,k})=(1+o(1)) m^k/k!$.
\end{proposition}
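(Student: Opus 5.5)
The plan is to compute $\Lambda(K_{1,k})$ straight from Definition~\ref{def:Lambda}: first list the admissible states, then bound $\Phi_H(F,y)$ from above, and finally exhibit a choice of $(F,y)$ that reaches the bound. Let $H=K_{1,k}$ with center $c$ and leaves $\ell_1,\dots,\ell_k$. The weight $x_c=0$, $x_{\ell_i}=1$ gives $\alpha\ge k$, and the fractional edge cover with all weights $1$ gives $\alpha\le k$. Hence $\alpha=k$, while $|V(H)|/2=(k+1)/2<k$ because $k\ge 2$.

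\emph{Admissible states.} We go through the subsets $S\subseteq V(H)$.
\begin{itemize}
\item If $c\in S$, then every leaf outside $S$ is isolated in $H[R_S]$, so $\beta(S)=k-|S\cap\{\ell_i\}|$. This equals $\alpha$ only for $S=\{c\}$, where $Z_S$ is the set of all leaves and $B_S$ is empty.
\item If $c\notin S$ and some leaf lies outside $S$, then $H[R_S]$ is a star $K_{1,t}$ with $t\ge 1$ and no isolated vertices. So $\beta(S)=\alpha^*(K_{1,t})=t$. Equality $t=k$ forces $S=\emptyset$, but $B_\emptyset=H$ is non-balanced, so this case gives nothing admissible.
\item If $S=\{\ell_1,\dots,\ell_k\}$, then $Z_S=\{c\}$ and $\beta(S)=1<k$.
\end{itemize}
So the only admissible state is $S=\{c\}$. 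An embedding $\psi$ is then just a choice of a vertex $v\in V(F)$, and $\Gamma_\psi(\ell_i)=\{v\}$ for every $i$. This gives
\[
\Phi_H(F,y)=\sum_{v\in V(F)} Y_{\{v\}}^{\,k}.
\]

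\emph{Upper bound.} The quantities $Y_{\{v\}}=\sum_{A\ni v}y_A$ are nonnegative and satisfy $\sum_v Y_{\{v\}}=\sum_A|A|y_A=s(y)\le 1$. Since $k\ge 2$, we get $\sum_v Y_{\{v\}}^k\le\bigl(\sum_v Y_{\{v\}}\bigr)^k\le 1$, and therefore $\Lambda(K_{1,k})\le 1$.

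\emph{Lower bound.} Take $F$ to be a single vertex $v$ (the definition allows any finite graph) and set $y_{\{v\}}=1$. Then $s(y)=1$ and $\Phi_H(F,y)=1$, which is the star $K_{1,m}$ in the construction of Lemma~\ref{lem:lower}. Hence $\Lambda(K_{1,k})=1$.

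Since $|\Aut(K_{1,k})|=k!$ for $k\ge 2$ (the center is fixed and the leaves are permuted arbitrarily), we get $\lambda=1/k!$, and the asymptotic formula for $N(m,K_{1,k})$ follows from Theorem~\ref{thm:main}. The only step needing care is the case analysis of admissible states, in particular checking that $S=\emptyset$ is excluded because $B_\emptyset=H$ is non-balanced. Everything else reduces to the elementary inequality $\sum a_v^k\le(\sum a_v)^k$.
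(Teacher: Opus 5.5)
Your proposal is correct and follows the paper's proof essentially step for step. It uses the same case analysis isolating $S=\{c\}$ as the unique admissible state, the same bound $\sum_v Y_{\{v\}}^k\le(\sum_v Y_{\{v\}})^k\le s(y)^k\le 1$, and the same one-vertex core with $y_{\{v\}}=1$. The only cosmetic difference is that you certify $\alpha^*(K_{1,k})\le k$ via the all-ones fractional edge cover, whereas the paper uses the primal estimate $x_c+\sum_\ell x_\ell\le x_c+k(1-x_c)\le k$.
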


\begin{proof}
Let $c$ be the center vertex of $K_{1,k}$. The fractional independent set program has value $k$: the value $k$ is attained by putting weight $1$ on every leaf and weight $0$ on $c$, while the constraints $x_c+x_\ell\le 1$ give $x_c+\sum_\ell x_\ell\le x_c+k(1-x_c)\le k$ for $k\ge 2$. Thus $\alpha^*(K_{1,k})=k$.

We determine the admissible states. If $c\notin S$ and $S=\emptyset$, then the residual is $K_{1,k}$ itself, which is non-balanced, so $S$ is not admissible. If $c\notin S$ and $S$ contain at least one leaf, then $\beta(S)<k$. If $c\in S$ and $S$ contain $t$ leaves, then the residual consists of $k-t$ isolated leaves and $B_S$ is empty, so $\beta(S)=k-t$. Hence the only admissible state is $S=\{c\}$.

For any finite core $F$ and feasible $y$, the core embedding $\psi$ sends $c$ to some vertex $u\in V(F)$, and each leaf has $\Gamma_\psi(\ell)=\{u\}$. Therefore
\[
\Phi_{K_{1,k}}(F,y)=\sum_{u\in V(F)}Y_{\{u\}}^k.
\]
Since $\sum_{u\in V(F)}Y_{\{u\}}=\sum_{\emptyset\ne A\subseteq V(F)}|A|y_A=s(y)\le 1$, we have $\sum_uY_{\{u\}}^k\le(\sum_uY_{\{u\}})^k\le 1$. Hence $\Lambda(K_{1,k})\le 1$. Equality is achieved by a one-vertex core $F=\{u\}$ with $y_{\{u\}}=1$, so $\Lambda(K_{1,k})=1$. Since $|\Aut(K_{1,k})|=k!$, Theorem \ref{thm:main} gives $N(m,K_{1,k})=(1+o(1)) m^k/k!$. The lower bound is witnessed by the $m$-edge star, which contains $\binom mk=(1+o(1))m^k/k!$ copies of $K_{1,k}$.
\end{proof}

Theorem \ref{thm:alon-second} gives a useful exact reduction for every star forest, not only for a single star. Write a star forest as $H=\bigsqcup_{a\ge1}c_aK_{1,a}$, where only finitely many integers $c_a$ are nonzero, and put $e(H)=\sum_a ac_a$.

\begin{proposition}\label{prop:star-forest-partition}
For every $m\ge1$,
\[
N(m,H)=\max_{\substack{s\ge1,\ b_1,\ldots,b_s\ge1\\ b_1+\cdots+b_s=m}} [\prod_a z_a^{c_a}]\prod_{j=1}^s\left(1+\sum_{a:c_a>0}\binom{b_j}{a}z_a\right).
\]
Moreover,
\[
\lambda(H)=\sup_{\substack{s\ge1,\ x_1,\ldots,x_s\ge0\\ x_1+\cdots+x_s=1}} [\prod_a z_a^{c_a}]\prod_{j=1}^s\left(1+\sum_{a:c_a>0}\frac{x_j^a}{a!}z_a\right).
\]
\end{proposition}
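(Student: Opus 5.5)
Both formulas follow from Theorem~\ref{thm:alon-second} and Theorem~\ref{thm:main}; no new extremal argument is needed.

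\emph{Exact formula.} By Theorem~\ref{thm:alon-second}, $N(m,H)$ is the maximum of $N(S,H)$ over star forests $S=\bigsqcup_{j=1}^s K_{1,b_j}$ with $b_1+\cdots+b_s=m$. For such an $S$, a copy of $H$ is a choice of edge sets $A_j\subseteq E(K_{1,b_j})$ such that the multiset $\{|A_j|:A_j\ne\emptyset\}$ equals the multiset of star sizes of $H$. This uses the bijection in the proof of Theorem~\ref{thm:alon-second}: a subgraph with edge set $A$ is $\bigsqcup K_{1,t_j}$. For each $j$ there are three options:
\begin{itemize}
\item take $A_j=\emptyset$, which gives the factor $1$;
\item take $|A_j|=a$ for some $a$ with $c_a>0$, which can be done in $\binom{b_j}{a}$ ways and contributes the marker $z_a$;
\item take any other size, which is useless and is discarded.
\end{itemize}
Hence $N(S,H)$ is the coefficient of $\prod_a z_a^{c_a}$ in $\prod_j\bigl(1+\sum_{a:c_a>0}\binom{b_j}{a}z_a\bigr)$. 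Maximizing over compositions of $m$ gives the first identity.

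\emph{Limit formula.} By Theorem~\ref{thm:main}, $\lambda(H)=\lim_m N(m,H)/m^{\alpha}$. First we check that $\alpha=\alpha^*(H)=e(H)=\sum_a ac_a$. Each star component $K_{1,a}$ contributes $a$ for $a\ge2$ (Proposition~\ref{prop:stars}) and $1$ for $a=1$, and $\alpha^*$ is additive over components. So the target exponent is $e(H)$.

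Write $P_m(b)$ for the coefficient appearing in the exact formula. Since the coefficient extraction selects exactly $\sum_a c_a$ factors with sizes summing to $e(H)$, the polynomial $P_m(b)$ is a sum of products of binomials whose total degree is $e(H)$.

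For the lower bound, fix $s$ and $x_1,\dots,x_s$ with $x_1+\cdots+x_s=1$. Take $b_j=\lfloor x_j m\rfloor$, absorbing the rounding remainder into one extra star or into $b_1$. Then $\binom{b_j}{a}=(1+o(1))x_j^a m^a/a!$ whenever $x_j>0$. This gives $\liminf_m N(m,H)/m^{e(H)}\ge$ the coefficient with $x_j^a/a!$, and taking the supremum gives the lower bound.

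For the upper bound, use $\binom{b}{a}\le b^a/a!$ and the fact that all coefficients are nonnegative. Setting $x_j=b_j/m$ yields
\[
P_m(b)\le m^{e(H)}\,[\textstyle\prod_a z_a^{c_a}]\prod_j\bigl(1+\sum_a x_j^a z_a/a!\bigr)
\]
with $x_1+\cdots+x_s=1$. Since $s$ is unbounded here, this is exactly what the supremum over all $s$ accounts for. Dividing by $m^{e(H)}$ and taking the limit gives $\lambda(H)\le\sup$.

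\emph{Expected obstacle.} Everything is routine except the bookkeeping that the extracted coefficient is homogeneous of degree exactly $e(H)$ in the $b_j$. Otherwise the normalization by $m^{\alpha}$ would not match. This is handled by the observation that each selected monomial $z_a$ carries exactly degree $a$.
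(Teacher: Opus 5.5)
Your proposal is correct and follows the paper's proof essentially step for step. The exact formula comes from Theorem~\ref{thm:alon-second} together with the edge-set bijection for copies of $H$ in a star forest. The limit formula comes from $\alpha^*(H)=e(H)$ and Theorem~\ref{thm:main}, with $\binom{b}{a}\le b^a/a!$ giving the upper bound and $b_j=x_jm+o(m)$ giving the lower bound.

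The one piece of bookkeeping you gloss over is the constraint $b_j\ge1$. Any part with $b_j=0$ (for instance when $x_j=0$) must be dropped, which is harmless because such a factor equals $1$.
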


\begin{proof}
For a fixed star forest $S=\bigsqcup_{j=1}^sK_{1,b_j}$, a copy of $H$ in $S$ is obtained by assigning each component $K_{1,a}$ of $H$ to a distinct component $K_{1,b_j}$ of $S$ and then choosing $a$ edges from that star. Thus the displayed coefficient is exactly $N(S,H)$. Maximizing over all partitions of $m$ and applying Theorem \ref{thm:alon-second} proves the exact formula. Since $\alpha^*(H)=e(H)$ by additivity over star components, Theorem \ref{thm:main} gives $N(m,H)=(1+o(1))\lambda(H)m^{e(H)}$. For the upper bound on the limit, let $x_j=b_j/m$ and use $\binom{b_j}{a}\le b_j^a/a!$. For the lower bound, fix a finite probability vector $(x_1,\ldots,x_s)$, take integers $b_j=x_jm+o(m)$ with $\sum_jb_j=m$, and use $\binom{b_j}{a}=(x_j^a/a!+o(1))m^a$. Taking the supremum over all finite probability vectors gives the second formula.
\end{proof}


\begin{lemma}\label{lem:weighted-odd-path}
Let $k\ge 1$, and let $p_1,\ldots,p_n\ge 0$ satisfy
$\sum_{i=1}^n p_i\le 1$. Then
\[
        \sum_{\substack{i_1,\ldots,i_k\in[n]\\ \text{\rm distinct}}}
        p_{i_1}p_{i_k}\prod_{j=1}^{k-1}\min(p_{i_j},p_{i_{j+1}})
        \le
        \max_{a\in\mathbb N,\,a\ge k}\frac{(a)_k}{a^{k+1}},
\]
where $(a)_k=a(a-1)\cdots(a-k+1)$.
\end{lemma}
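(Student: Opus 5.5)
The plan is to linearize the $\min$ factors by a layer-cake decomposition of the vector $p$, which turns the left-hand side into a multilinear form in nonnegative weights summing to at most $1$. It then suffices to bound each coefficient of that form, and on the diagonal the coefficients are exactly $(a)_k/a^{k+1}$.

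First reduce to sorted vectors. Relabelling indices does not change the sum, since the constraint ``distinct'' and the summand are invariant under permutations of $[n]$. So assume $p_1\ge p_2\ge\cdots\ge p_n\ge 0$ and put $p_{n+1}=0$. Define
\[
\mu_a=a(p_a-p_{a+1})\ge 0 \qquad (1\le a\le n),
\]
so that $p_i=\sum_{a\ge i}\mu_a/a$ and $\sum_a\mu_a=\sum_i p_i\le 1$. The key observation is that on the sorted cone the minimum is linear in $\mu$:
\[
\min(p_i,p_j)=p_{\max(i,j)}=\sum_a \frac{\mu_a}{a}\,\mathbf 1[\max(i,j)\le a].
\]
Each of the $k+1$ factors $p_{i_1}$, $\min(p_{i_j},p_{i_{j+1}})$ for $1\le j\le k-1$, and $p_{i_k}$ is therefore a nonnegative linear form in $\mu$. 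The left-hand side becomes
\[
\sum_{a_0,\ldots,a_k}\mu_{a_0}\cdots\mu_{a_k}\,T(a_0,\ldots,a_k),
\]
where $T(a_0,\ldots,a_k)$ is $\bigl(\prod_j a_j\bigr)^{-1}$ times the number of distinct sequences $(i_1,\ldots,i_k)$ satisfying $i_1\le\min(a_0,a_1)$, $i_j\le\min(a_{j-1},a_j)$ for $2\le j\le k-1$, and $i_k\le\min(a_{k-1},a_k)$. When $k=1$ there is a single index $i_1$, and both $p$-factors give the constraints $i_1\le a_0$ and $i_1\le a_1$.

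Because $\sum\mu_a\le1$ and all coefficients are nonnegative, it suffices to show
\[
T(a_0,\ldots,a_k)\le c_k:=\max_{a\ge k}\frac{(a)_k}{a^{k+1}}
\]
for every tuple. On the diagonal $a_0=\cdots=a_k=a$, the count is exactly $(a)_k$, so $T=(a)_k/a^{k+1}\le c_k$. This is consistent with the extremal configuration $p=(1/a,\ldots,1/a,0,\ldots)$, where equality holds.

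The main obstacle is the off-diagonal bound. Write $b_j$ for the upper bound on $i_j$ above; note that each $a_j$ with $1\le j\le k-1$ appears in two consecutive $b$'s. For the count, sort the $b$'s and use $\#\{\text{distinct } i_j\le b_j\}\le\prod_{r}(b_{(r)}-r+1)_+$, where $b_{(1)}\le\cdots\le b_{(k)}$. Then try a Hölder/AM–GM-type comparison showing that this quantity divided by $\prod a_j$ is at most the geometric mean of the diagonal values $(a_j)_k/a_j^{k+1}$, weighted by multiplicities. That geometric mean is at most $c_k$.

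If a direct Hölder step fails, I would instead argue by a smoothing induction. Fix all $a_j$ except one extreme index, say the one achieving $\max_j a_j$ or $\min_j a_j$. View $T$ as a function of that $a_j$ and show it is maximized when this $a_j$ equals a neighbouring value. Iterating collapses the tuple to the diagonal without decreasing $T$. The step I expect to need most care is exactly this monotonicity or smoothing for the middle variables, since they enter two constraints at once. Small cases $k=1,2$ can serve as checks for the shape of the inequality.
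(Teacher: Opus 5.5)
\textbf{Gap: the off-diagonal coefficient bound is never proved.} Your reduction is sound. After sorting, $\mu_a=a(p_a-p_{a+1})$ satisfies $p_i=\sum_{a\ge i}\mu_a/a$ and $\sum_a\mu_a=\sum_ip_i\le 1$. The identity $\min(p_i,p_j)=p_{\max(i,j)}$ makes every factor linear in $\mu$, so the left-hand side equals $\sum\mu_{a_0}\cdots\mu_{a_k}T(a_0,\ldots,a_k)$ with $T$ as you describe. (The count is in fact exactly $\prod_r(b_{(r)}-r+1)_+$.)

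Once this is done, the whole content of the lemma is the off-diagonal bound $T(a_0,\ldots,a_k)\le c_k$, and you only list two strategies for it.
\begin{itemize}
\item The Hölder/geometric-mean route is false as stated. Take $k=2$ and $(a_0,a_1,a_2)=(1,2,2)$. Then $b_1=1$, $b_2=2$, the only admissible sequence is $(i_1,i_2)=(1,2)$, and $T=1/4=c_2$. But the diagonal value at $a_0=1$ is $(1)_2/1^3=0$, so any weighted geometric mean that gives $a_0$ positive weight vanishes.
\item The smoothing induction is only sketched. Its key monotonicity claim for the middle variables, which enter two bounds at once, is left unverified.
\end{itemize}
As written, the argument is therefore incomplete.

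\textbf{The missing step: a short charging argument.} Let $M=\max_ja_j=a_m$. Since $b_j=\min(a_{j-1},a_j)$, send $b_j$ to $a_{j-1}$ for $j\le m$ and to $a_j$ for $j>m$. This is a bijection from $\{1,\ldots,k\}$ onto $\{0,\ldots,k\}\setminus\{m\}$ with each $b_j$ at most its image, so $\prod_{j=0}^ka_j\ge M\prod_{j=1}^kb_j$. If the count is positive, then $b_{(r)}\ge r$ for all $r$, so $M\ge b_{(k)}\ge k$. Using $b_{(r)}\le M$,
\[
T(a_0,\ldots,a_k)\le\frac1M\prod_{r=1}^k\Bigl(1-\frac{r-1}{b_{(r)}}\Bigr)\le\frac1M\prod_{r=1}^k\Bigl(1-\frac{r-1}{M}\Bigr)=\frac{(M)_k}{M^{k+1}}\le c_k .
\]

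\textbf{Comparison with the paper.} With this step inserted your proof is complete, and it takes a genuinely different route. The paper first notes that the smallest weight $p_{i_r}$ occurs in two consecutive factors, which bounds each ordered term by $\min_{i\in A}p_i\,p_A$. It then applies a layer-cake decomposition only to that single minimum, and controls each level set $I_t$ by Maclaurin's inequality together with $\binom{n_t}{k}n_t^{-k}\le C_kn_t$. Your sorted-layer decomposition linearizes all $k+1$ factors at once, so every layer is uniform. In your route, Maclaurin is replaced by the exact falling-factorial count plus the charging bound above.
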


\begin{proof}
For $A\subseteq[n]$, write $p_A=\prod_{i\in A}p_i$. We first prove that
\[
        \sum_{\substack{i_1,\ldots,i_k\in[n]\\ \text{\rm distinct}}}
        p_{i_1}p_{i_k}\prod_{j=1}^{k-1}\min(p_{i_j},p_{i_{j+1}})
        \le
        k!\sum_{\substack{A\subseteq[n]\\ |A|=k}}
        \min_{i\in A}p_i\, p_A .
\]
For $k=1$, this is an equality. Assume $k\ge 2$, and fix an ordered
$k$-tuple $(i_1,\ldots,i_k)$ with distinct entries. Let
$A=\{i_1,\ldots,i_k\}$, and choose $r\in[k]$ such that
$p_{i_r}=\min_{i\in A}p_i$.

Suppose first that $1<r<k$. Then
\[
        p_{i_1}\prod_{j=1}^{r-2}\min(p_{i_j},p_{i_{j+1}})
        \le
        \prod_{j=1}^{r-1}p_{i_j},
\]
and
\[
        p_{i_k}\prod_{j=r+1}^{k-1}\min(p_{i_j},p_{i_{j+1}})
        \le
        \prod_{j=r+1}^{k}p_{i_j}.
\]
Moreover,
\[
        \min(p_{i_{r-1}},p_{i_r})\min(p_{i_r},p_{i_{r+1}})
        =
        p_{i_r}^2
        =
        \min_{i\in A}p_i\,p_{i_r}.
\]
Multiplying these three estimates gives
\[
        p_{i_1}p_{i_k}\prod_{j=1}^{k-1}\min(p_{i_j},p_{i_{j+1}})
        \le
        \min_{i\in A}p_i\,p_A .
\]
The cases $r=1$ and $r=k$ are identical, with the missing left or right
product omitted. Summing over all orders of each $k$-set $A$ gives the
displayed inequality.

It remains to bound
$
        T_k(p):=\sum_{\substack{A\subseteq[n]\\ |A|=k}}
        \min_{i\in A}p_i\,p_A .
$
For $t\ge 0$, let $I_t=\{i:p_i\ge t\}$ and $n_t=|I_t|$. Since
$
        \min_{i\in A}p_i
        =
        \int_0^\infty {\bf 1}_{A\subseteq I_t}\,dt,
$
we have
\[
        T_k(p)
        =
        \int_0^\infty e_k((p_i)_{i\in I_t})\,dt,
\]
where $e_k$ denotes the elementary symmetric polynomial of degree $k$, and
$e_k=0$ if $n_t<k$. If $n_t\ge k$, by Maclaurin's inequality, we obtain that 
\[
        e_k((p_i)_{i\in I_t})
        \le
        \binom{n_t}{k}
        \left(\frac{\sum_{i\in I_t}p_i}{n_t}\right)^k
        \le
        \binom{n_t}{k}n_t^{-k}.
\]
Let
$
        C_k=\max_{a\in\mathbb N,\,a\ge k}\frac{\binom ak}{a^{k+1}}.
$
The maximum is attained, since $\binom ak/a^{k+1}\to 0$ as $a\to\infty$. For every $n_t\ge k$,
$
        \binom{n_t}{k}n_t^{-k}
        =
        n_t\frac{\binom{n_t}{k}}{n_t^{k+1}}
        \le
        C_k n_t.
$
Therefore,
$
        T_k(p)
        \le
        C_k\int_0^\infty n_t\,dt
        =
        C_k\sum_{i=1}^n p_i
        \le
        C_k.
$
Combining this with the first part of the proof yields
\[
        \sum_{\substack{i_1,\ldots,i_k\in[n]\\ \text{\rm distinct}}}
        p_{i_1}p_{i_k}\prod_{j=1}^{k-1}\min(p_{i_j},p_{i_{j+1}})
        \le
        k!C_k
        =
        \max_{a\in\mathbb N,\,a\ge k}\frac{(a)_k}{a^{k+1}}.
\]
This proves the lemma.
\end{proof}

\begin{proposition}\label{prop:odd-paths}
Let $k\ge1$, and let $P_{2k+1}$ be the path on $2k+1$ vertices. Define
$
        c_k=\max_{a\in\mathbb N,\,a\ge k}\frac{(a)_k}{a^{k+1}}.
$
Then we have 
$
        \Lambda(P_{2k+1})=c_k$, and $
        \lambda(P_{2k+1})=\frac{c_k}{2},$
and consequently
$
        N(m,P_{2k+1})=(1+o(1)) \frac{c_k}{2}m^{k+1}.
$
Moreover, the lower bound is attained asymptotically by
$K_{a_k,\lfloor m/a_k\rfloor}$, where $a_k$ is any integer maximizing
$(a)_k/a^{k+1}$.
\end{proposition}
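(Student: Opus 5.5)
The proof has three steps: compute $\alpha^*(P_{2k+1})$, identify the admissible states so that $\Phi_{P_{2k+1}}$ takes an explicit form, and then bound that form above using Lemma~\ref{lem:weighted-odd-path} and realize the bound with a suitable core.

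Label the path $v_0v_1\cdots v_{2k}$. Weight $1$ on the $k+1$ even-indexed vertices is feasible. Conversely, the edges $v_0v_1,v_2v_3,\dots,v_{2k-2}v_{2k-1}$ together with $v_{2k-1}v_{2k}$ form an edge cover with $k+1$ edges. Hence $\alpha=\alpha^*(P_{2k+1})=k+1$.

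\textbf{Admissible states.} For $S\subseteq V(P_{2k+1})$, the residual $H[R_S]$ is a union of subpaths. Each component on $n$ vertices contributes $\lceil n/2\rceil$ to $\beta(S)$: an isolated vertex contributes $1$, and a path $P_n$ with $n\ge 2$ has $\alpha^*(P_n)=\lceil n/2\rceil$. Admissibility requires every nontrivial component to be balanced, so it must have an even number of vertices and contributes exactly $n/2$. Count $\beta(S)$ against the $2k+1$ vertices. Each vertex of $S$ contributes nothing. An even component contributes half its size. An isolated vertex contributes $1$ but must be separated from its neighbours by $S$-vertices. A short counting argument then shows that $\beta(S)=k+1$ forces $S=\{v_1,v_3,\dots,v_{2k-1}\}$, with $Z_S$ the even-indexed vertices and $B_S$ empty. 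Concretely, pair each residual piece with an adjacent removed vertex and compare with the all-even configuration. I expect this case analysis to be the main fiddly point. For $k=1$ it is immediate: $S=\emptyset$ gives the non-balanced $B_S=P_3$, and $S=\{v_0\}$ gives $\beta=1<2$.

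\textbf{The variational problem.} Since $H[S]$ is edgeless on $k$ vertices, $\psi$ ranges over injections $v_{2j-1}\mapsto u_j\in V(F)$. The form is
\[
\Phi(F,y)=\sum_{u_1,\dots,u_k\ \text{distinct}} Y_{\{u_1\}}\,Y_{\{u_k\}}\prod_{j=1}^{k-1}Y_{\{u_j,u_{j+1}\}}.
\]
By definition $Y_{\{a,b\}}\le\min(Y_{\{a\}},Y_{\{b\}})$. Moreover
\[
\sum_u Y_{\{u\}}=\sum_A|A|y_A=s(y)\le1.
\]
Setting $p_u=Y_{\{u\}}$, Lemma~\ref{lem:weighted-odd-path} gives $\Phi\le c_k$, and hence $\Lambda\le c_k$.

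\textbf{Matching lower bound.} Take $F$ to be the edgeless graph on $a=a_k$ vertices, set $y_{V(F)}=1/a$, and all other $y_A=0$. Then $s(y)=1$ and every $Y_\Gamma=1/a$, so
\[
\Phi=(a)_k\,a^{-(k+1)}=c_k.
\]
In the construction of Lemma~\ref{lem:lower} this core becomes (essentially) $K_{a_k,\lfloor m/a_k\rfloor}$. Finally, $|\Aut(P_{2k+1})|=2$, so $\lambda(P_{2k+1})=c_k/2$, and Theorem~\ref{thm:main} gives the asymptotic formula for $N(m,P_{2k+1})$.
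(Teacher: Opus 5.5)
Your proposal is correct and follows the paper's proof essentially step for step. It uses the same unique admissible state (the $k$ interior vertices at odd positions in your labeling), the same explicit form of $\Phi_{P_{2k+1}}$, the same reduction to Lemma~\ref{lem:weighted-odd-path} via $Y_{\{u,v\}}\le\min(Y_{\{u\}},Y_{\{v\}})$ and $\sum_u Y_{\{u\}}=s(y)\le 1$, and the same one-class core with $y_{V(F)}=1/a$ (for the $K_{a_k,\lfloor m/a_k\rfloor}$ claim, the paper simply counts the $(a)_k(b)_{k+1}$ embeddings directly rather than saying ``essentially'').

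The one step you leave as a sketch is identifying the admissible state; there the paper uses $\beta(S)=\alpha_{\mathrm{ind}}(P_{2k+1}[R_S])$ together with uniqueness of the maximum independent set. Your counting route closes in one line. With $t\le|S|+1$ residual components of sizes $n_i$, one has $\beta(S)=\sum_i\lceil n_i/2\rceil\le(2k+1-|S|+t)/2\le k+1$, with equality only if all $n_i$ are odd and $t=|S|+1$. Admissibility (nontrivial components must be even) then forces every $n_i=1$.
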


\begin{proof}
Write $P_{2k+1}=v_1v_2\cdots v_{2k+1},$
and let
$
        O=\{v_1,v_3,\ldots,v_{2k+1}\},$ and $
        E=\{v_2,v_4,\ldots,v_{2k}\}.$
We simply have that $
        \alpha^*(P_{2k+1})=k+1.$
We next determine the admissible states. Let $S\subseteq V(P_{2k+1})$, and put $R=V(P_{2k+1})\setminus S$. Every component of $P_{2k+1}[R]$ is a path.
For every nontrivial path $P_\ell$, one has $\alpha^*(P_\ell)=\lceil \ell/2\rceil$. Since isolated residual vertices are counted in $Z_S$, it follows that
$
        \beta(S)=\alpha_{\mathrm{ind}}(P_{2k+1}[R]),
$
where $\alpha_{\mathrm{ind}}$ denotes the ordinary independence number.

The set $O$ is the unique independent set of size $k+1$ in $P_{2k+1}$.
If $S\cap O\ne\emptyset$, then $P_{2k+1}[R]$ cannot contain an independent set of size $k+1$, because such an independent set would also be a maximum independent set of $P_{2k+1}$, a contradiction to the uniqueness of $O$. Thus $ \beta(S)<k+1. $

Now suppose $S\subseteq E$. Then $O\subseteq R$, so $\beta(S)=k+1$. If $S=E$, then $Z_S=O$ and $B_S=\emptyset$, so $S$ is admissible. If $S\subsetneq E$, then at least one even-positioned vertex remains. Since only even-positioned vertices have been removed, every nontrivial component of $P_{2k+1}[R]$ is an odd path, and at least one such nontrivial component exists. Therefore $B_S$ is a nonempty disjoint union of odd paths. Each odd path is non-balanced, and hence $B_S$ is non-balanced. Thus $S$ is not admissible. Consequently, the only admissible state is $S=E. $

Let $F$ be a finite core and let $y$ be feasible. For $u\in V(F)$, write
$
        p_u=Y_{\{u\}}.
$
Then
\[
        \sum_{u\in V(F)}p_u
        =
        \sum_{\emptyset\ne A\subseteq V(F)}|A|y_A
        =
        s(y)
        \le 1.
\]
Since the only admissible state is $S=E$, every core embedding is an injective map from the independent set $E$ into $V(F)$. If $\psi(v_{2i})=u_i$ for $1\le i\le k$, then the isolated residual vertices $v_1,v_3,\ldots,v_{2k+1}$ have core-neighborhoods
$
        \{u_1\},\{u_1,u_2\},\{u_2,u_3\},\ldots,\{u_{k-1},u_k\},\{u_k\}.
$
Therefore
\[
        \Phi_{P_{2k+1}}(F,y)
        =
        \sum_{\substack{u_1,\ldots,u_k\in V(F)\\ \text{\rm distinct}}}
        p_{u_1}p_{u_k}
        \prod_{i=1}^{k-1}Y_{\{u_i,u_{i+1}\}}.
\]
For all distinct $u,v\in V(F)$,
\(
        Y_{\{u,v\}}\le \min(Y_{\{u\}},Y_{\{v\}})
        =
        \min(p_u,p_v).
\)
By Lemma~\ref{lem:weighted-odd-path},
\[
        \Phi_{P_{2k+1}}(F,y)
        \le
        \max_{a\in\mathbb N,\,a\ge k}\frac{(a)_k}{a^{k+1}}
        =
        c_k.
\]
Since $F$ and $y$ were arbitrary, this proves $\Lambda(P_{2k+1})\le c_k. $

For the reverse inequality, fix an integer $a\ge k$. Let $F$ be any graph on $a$ vertices, and define
\(
        y_{V(F)}=\frac1a\),
\(
        y_A=0\) for all $A\ne V(F)$.
Then $s(y)=1$, and $Y_\Gamma=1/a$ for every nonempty
$\Gamma\subseteq V(F)$. Since the only admissible state is $S=E$, we obtain
\[
        \Phi_{P_{2k+1}}(F,y)
        =
        (a)_k\left(\frac1a\right)^{k+1}
        =
        \frac{(a)_k}{a^{k+1}}.
\]
Taking the maximum over $a\ge k$ gives $ \Lambda(P_{2k+1})\ge c_k. $ Hence $ \Lambda(P_{2k+1})=c_k.$

Finally, since $|\operatorname{Aut}(P_{2k+1})|=2$, by Theorem~\ref{thm:main}, we have 
\[
        N(m,P_{2k+1})
        = (1+o(1))
        \frac{\Lambda(P_{2k+1})}{2}m^{k+1}
        =
        \frac{c_k}{2}m^{k+1}.
\]

It remains to identify the construction behind the lower bound. Let $a=a_k$
maximize $(a)_k/a^{k+1}$, and put $b=\lfloor m/a\rfloor$. In
$K_{a,b}$, the number of labeled embeddings sending
$v_2,v_4,\ldots,v_{2k}$ into the $a$-vertex side and
$v_1,v_3,\ldots,v_{2k+1}$ into the $b$-vertex side is
\[
        (a)_k(b)_{k+1}
        =
        \frac{(a)_k}{a^{k+1}}m^{k+1}+O_k(m^k).
\]
The labeled embeddings with the two sides reversed contribute only
$O_k(m^k)$. Therefore
\[
        N(K_{a,b},P_{2k+1})
        =
        \frac12\frac{(a)_k}{a^{k+1}}m^{k+1}+O_k(m^k).
\]
If $ab<m$, the remaining $m-ab$ edges may be added in a disjoint component,
for instance a star, which does not change the leading term. Thus
$K_{a_k,\lfloor m/a_k\rfloor}$ attains the lower bound asymptotically.
\end{proof}

\begin{remark}
The edge $K_2=K_{1,1}$ belongs to the balanced case, and $N(m,K_2)=m$. More generally, matchings, cycles, and paths with an even number of vertices are balanced, so Proposition \ref{prop:balanced-Lambda} applies to them. For example, $P_4$ is balanced and hence $N(m,P_4)=(1+o(1))2m^2$. The path $P_3$ is $K_{1,2}$, so Proposition \ref{prop:stars} gives $N(m,P_3)=(1+o(1))m^2/2$. The next odd path is $P_5$; applying Proposition \ref{prop:odd-paths} with $k=2$ gives $c_2=\max_{a\ge2}(a)_2/a^3=1/4$, and therefore $N(m,P_5)=(1+o(1))m^3/8$. In general, Proposition \ref{prop:odd-paths} gives the leading constant for every odd path $P_{2k+1}$.
\end{remark}

\section*{Acknowledgements}

This work was supported by the National Key R\&D Program of China
(No.~2022YFA1006400) and the National Natural Science Foundation of China
(No.~12571376).

\bibliographystyle{plain}
\bibliography{refs}
\end{document}